\theoremstyle{plain}
\newtheorem*{theorem*}{Theorem}
\newtheorem{theorem2}{Theorem}[section]
\newtheorem{proposition}[theorem2]{Proposition}
\newtheorem{lemma}[theorem2]{Lemma}
\theoremstyle{definition}
\newtheorem{definition}[theorem2]{Definition}
\theoremstyle{remark}
\newtheorem*{remark}{Remark}
\numberwithin{equation}{section}
\newcommand{\tr}{\mathrm{tr}}
\renewcommand{\div}{\mathrm{div}}
\begin{document}
\title{On the Rigidity of $\mathbb{CP}^{2n}\times \mathbb{CP}^{1}$}
\author{Stuart James Hall}
\address{School of Mathematics and Statistics, Herschel Building, Newcastle University, Newcastle-upon-Tyne, NE1 7RU} 
\email{stuart.hall@ncl.ac.uk}
\urladdr{\href{https://www.ncl.ac.uk/maths-physics/staff/profile/stuarthall}{https://www.ncl.ac.uk/maths-physics/staff/profile/stuarthall}}

\begin{abstract}
	We revisit Koiso's original examples of rigid infinitesimally deformable Einstein metrics. We show how to compute Koiso's obstruction to the integrability of the infinitesimal deformations on $\mathbb{CP}^{n}\times \mathbb{CP}^{1}$ using completely elementary complex differential geometry.  
\end{abstract}
\maketitle
\section{Introduction}\label{Sec:1}
In the early 1980s, Koiso developed the structure theory for the moduli space of Einstein metrics around a given Einstein metric $g$ on a closed manifold $\mathcal{M}$ (see \cite{KoiOsaka1},  \cite{KoiOsaka2}, and \cite{Koi_EMCS}). The given metric $g$ is said to be \textit{infinitesimally deformable} if one can find tensors that look like the tangents to putative one-parameter families of geometrically distinct Einstein metrics passing through $g$ (see the next section for more detail). In \cite{KoiOsaka2}, Koiso found an obstruction to being able to integrate an infinitesimal deformation to form a genuine one-parameter family of Einstein metrics. He considered the case of symmetric spaces and proved the following result (which we paraphrase).
\begin{theorem*}[Koiso \cite{KoiOsaka2}, Theorem 6.12]
	The product K\"ahler--Einstein metric on ${\mathbb{CP}^{2n}\times \mathbb{CP}^{1}}$ admits infinitesimal Einstein deformations, none of which is integrable to second order.
\end{theorem*}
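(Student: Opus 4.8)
The plan is to produce the deformations by hand from the complex geometry of the two factors and then to feed them into Koiso's second-order obstruction, reducing the latter to a single integral of a cube of a first eigenfunction on $\mathbb{CP}^{2n}$.

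Normalise the product so that $\Ric = \lambda g$ with $g = g_1\oplus g_2$, $g_1$ on $\mathbb{CP}^{2n}$ and $g_2$ on $\mathbb{CP}^1$. First I would recall that on a Fano K\"ahler--Einstein manifold the first eigenvalue of $-\Lap$ on functions equals $2\lambda$ and is attained exactly on the Killing potentials (Matsushima--Lichnerowicz), and that for such an $f$ the gradient $\D f$ is the real part of a holomorphic vector field, so the $(2,0)$-part of $\Hess f$ vanishes and $\Hess f$ is $J$-invariant. Taking $f$ to be a Killing potential on the $\mathbb{CP}^{2n}$ factor, pulled back to the product, I would set
\[
h_f \;=\; \Hess f \;+\; \lambda f\, g_1 \;-\; (2n-1)\,\lambda f\, g_2,
\]
with $\Hess f$ and $g_1$ extended by zero in the $\mathbb{CP}^1$ directions and $g_2$ by zero in the $\mathbb{CP}^{2n}$ directions. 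The Bochner identity $\div \Hess f = d\Lap f + \Ric(\D f,\cdot) = -\lambda\, df$, together with $\div(f g_1)=df$ and $\div(f g_2)=0$, shows $\div h_f = 0$; the coefficients are chosen so that $\tr_g h_f = 0$; and $\Lap_L h_f = 2\lambda h_f$ follows from the splitting of the curvature of the product and the eigentensor property of $\Hess f$. Thus $h_f$ is an infinitesimal Einstein deformation, and the r\^ole of the second factor is precisely that multiplying $f$ by $g_2$ removes the trace without reintroducing divergence---something impossible on $\mathbb{CP}^{2n}$ alone, which is rigid.

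Next I would invoke Koiso's criterion: $h_f$ is integrable to second order if and only if the $L^2$-projection of the second variation $\Ric''(h_f,h_f)$ onto the space of infinitesimal deformations vanishes, and it suffices to exhibit a single $\omega$ in that space with $\int_M \langle \Ric''(h_f,h_f),\omega\rangle\,dV \neq 0$. I would take $\omega = h_f$. Substituting the explicit formula for the second variation of the Ricci tensor of a transverse-traceless field, integrating by parts, and using $\Lap_L h_f = 2\lambda h_f$ to trade second derivatives for curvature, every term becomes a contraction of $\Riem$ with $h_f^{\otimes 3}$ or with $\D h_f$; here the elementary input is the explicit Fubini--Study curvature (constant holomorphic sectional curvature on each factor) together with the $J$-invariance of $\Hess f$. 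Because $h_f$ is linear in $f$ and the $\mathbb{CP}^1$ factor contributes no eigenfunction, each surviving term is a universal constant times $\int_{\mathbb{CP}^{2n}} f^3\, \vol_{g_1}$ (the gradient-squared terms reduce to this after one integration by parts, using $\Lap f = -2\lambda f$), so the whole obstruction collapses to $c_n \int_{\mathbb{CP}^{2n}} f^3\,\vol_{g_1}$ for an explicit constant $c_n$.

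It remains to see that this integral is nonzero for a suitable $f$. The first eigenspace of $\mathbb{CP}^{2n}$ is, as an $SU(2n+1)$-module, the adjoint representation, with $f = f_A$ the moment-map component attached to a trace-free Hermitian matrix $A$; the trilinear form $(A,B,C)\mapsto \int f_A f_B f_C\,\vol_{g_1}$ is then a symmetric invariant, hence a multiple of the totally symmetric cubic invariant $d$ of $SU(2n+1)$, which is nonzero precisely because $2n+1\ge 3$. Choosing $A$ with $\int f_A^3\neq 0$---for instance a diagonal generator, where the integral can be evaluated directly---finishes the argument. I expect the main obstacle to be the middle step: carrying the second-variation formula through all of its curvature contractions without error and checking that the accumulated coefficient $c_n$ does not accidentally vanish. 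The representation-theoretic nonvanishing at the end is conceptually the crux but is a standard fact once the reduction to $\int f^3$ is in place.
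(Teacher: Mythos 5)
Your construction of the deformations and your overall strategy coincide with the paper's: the tensor $h_f=\Hess f+\lambda f g_1-(2n-1)\lambda f g_2$ is exactly Koiso's deformation, your verification via the Bochner formula is a legitimate substitute for the paper's K\"ahler-identity check, and the reduction of the obstruction to a multiple $c_n\int_{\mathbb{CP}^{2n}}f^3\,dV_{g_1}$ by $SU(2n+1)$-invariance is precisely the mechanism the paper exploits. The genuine gap is the step you yourself flag as ``the main obstacle'': you never compute $c_n$, and there is no a priori reason for it to be nonzero. Determining $c_n$ \emph{is} the content of the proof. Several of the individual cubic invariants that arise vanish identically (for instance $\langle|\Hess f|^2,f\rangle_{L^2}=0$), so one cannot rule out a complete cancellation without doing the computation. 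The paper pins the constant down by evaluating every term on the single diagonal element $\gamma=\diag(-n,1,\dots,1)$ in an affine chart, using the explicit Fubini--Study metric, the closed-form Hessian and fourth covariant derivative of $f_\gamma$, a conversion lemma between real and complex index contractions, and a short list of beta-type integrals $I_n^r$; the outcome is $\mathcal{I}(h)=-4n(n-1)(n+1)^5\int f^3\,dV_{g_1}$. Without some such evaluation your argument establishes nothing.

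There is also a slip in the endgame. You finish by ``choosing $A$ with $\int f_A^3\neq 0$,'' which only shows that \emph{some} deformations are obstructed, whereas the theorem asserts that \emph{none} is integrable: the cubic $\tr(A^3)$ does have nontrivial zeros on trace-free Hermitian matrices of every size (e.g. $\diag(1,-1,0,\dots,0)$), so the diagonal test $\omega=h_f$ cannot obstruct every deformation. To get the full statement one must use the polarized obstruction, testing $\Ric''(h_A,h_A)$ against $h_B$ for all $B$; by the same invariance this reduces to the vanishing of $\tr(A^2B)$ for all trace-free $B$, i.e. to $A^2$ being a multiple of the identity, which is impossible for nonzero trace-free $A$ of odd size $2n+1$. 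Your framework (``it suffices to exhibit a single $\omega$'') already accommodates this, but your execution does not carry it out, so as written the proposal proves only that the product metric admits \emph{an} obstructed deformation.
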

This gave the first exampless of Einstein metrics that admitted infinitesimal deformations but were nevertheless isolated in the moduli space; metrics that are isolated are referred to as being \textit{rigid}. \\
\\
In \cite{KoiOsaka1}, Koiso demonstrated that the K\"ahler--Einstein metric on a complex Grassmannian of $m$-dimensional subspaces of an $(n+m)$-dimensional vector space admits infinitesimal Einstein deformations when $1<m,n$ (i.e. when the Grassmannian is not $\mathbb{CP}^{n}$). Recent work of the author with Schwahn and Semmelmann \cite{HSS24} has proved that, in the case when $n+m$ is odd, none of the deformations is integrable to second order and so the metric is rigid\footnote{Prior to producing the joint work \cite{HSS24}, the results appeared in the preprints \cite{Hall24} and \cite{SchSem24}.} (the $m=2$ case of this result was proved by Nagy and Semmelmann \cite{NagSem23}). The product metric on $\mathbb{CP}^{n}\times \mathbb{CP}^{1}$ is K\"ahler--Einstein. In this article we give another proof of Koiso's Theorem using the `local' complex-geometric approach that was applied to the Grassmannians in \cite{HSS24}. 

\subsection{Differences from Koiso's original approach}
The infinitesimal deformations of the product $\mathbb{CP}^{n}\times \mathbb{CP}^{1}$ were described explicitly by Koiso (note we do not require the complex dimension of the first factor to be even here). We use the fact that product metric is K\"ahler to verify the construction yields infinitesimal deformations. Koiso computed his obstruction for these tensors and showed that it did not vanish, thus showing all the deformations are obstructed. The calculation of the obstruction involves various integral quantities that are `cubic' in the deformation tensors (see Section \ref{Sec:2}). Koiso directly manipulated the expressions that arise in an impressive feat of computation, turning on the bread-and-butter techniques one learns as a student of Riemannian geometry.\\
\\
We use a different method, which is actually implicit in Koiso's paper \cite{KoiOsaka2}, and has been used recently to resolve similar rigidity questions. We give a brief outline of the principle here but refer the reader to \cite{BHMW} and \cite{HSS24} for further details. The group $G=SU_{n+1}$ acts isometrically on $\mathbb{CP}^{n}\times \mathbb{CP}^{1}$ by its standard action on the first factor of the product. The space of infinitesimal Einstein deformations is equivariantly isometric to $\mathfrak{g}=\mathfrak{su}_{n+1}$ and the terms in the obstruction can be thought of as $G$-invariant homogeneous cubic polynomials in $\mathfrak{g}$. The key to the calculations is that the space of such polynomials is $1$-dimensional and thus all the terms can be realised as a multiple of a fixed generator. To find the multiplying factor, we pick an element of $\mathfrak{g}$ that is particularly easy to work with, compute both the obstruction term and the generating polynomial, and compare. It is possible to make the calculations using some elementary complex geometry. In the case that the complex dimension of the first factor is even, Koiso's result follows from the fact that the generating polynomial does not vanish.  \\
\\
\textit{Acknowledgements:}  The author would like to thank Paul Schwahn and Uwe Semmelmann for useful discussions.
\section{Einstein deformations}\label{Sec:2}

\subsection{General Theory}

A good general exposition of the theory is given in \cite{Bes}. For brevity, we simply state the equations that are satisfied by an infinitesimal Einstein deformation ${h\in \Gamma(s^{2}(T^{\ast}\mathcal{M}))}$ 

\begin{definition}[Infinitesimal Einstein Deformation]
	Let $(\mathcal{M}^{n},g)$ be an Einstein manifold such that ${\mathrm{Ric}(g)=\lambda g}$ with $\lambda>0$. An \textit{infinitesimal Einstein deformation} (EID) is a section $h\in \Gamma(s^{2}(T^{\ast}\mathcal{M}))$ satisfying the following conditions:
	\begin{eqnarray}
	\mathrm{tr}_{g}(h) = 0, \label{eqn:trace_free}\\
	\div(h)=0, \label{div_free}\\
	 \Delta h +2\mathrm{Rm}(h) = 0, \label{Lin_Ein}
	\end{eqnarray}
	where $\Delta$ is the connection Laplacian and $\mathrm{Rm}$ is the curvature operator acting on symmetric 2-tensors. We denote the space of EIDs by $\varepsilon(g)$. 
\end{definition}

If the manifold $(\mathcal{M}^{n},g)$ is K\"ahler--Einstein and the EID $h$ is also invariant with respect to the complex structure $J$, then the equations definining an EID can be conveniently expressed as conditions involving the two-form associated to $h$, $\sigma \in \Omega^{(1,1)}(\mathcal{M})$:
\begin{eqnarray}
\Lambda(\sigma)=0, \label{eqn:HD1} \\
\bar{\partial}^{\ast}\sigma=0, \label{eqn:HD2} \\
\Delta_{\bar{\partial}}\sigma = \lambda \sigma, \label{eqn:HD3}     
\end{eqnarray}
where $\Lambda$ is the adjoint of the Lefschetz operator, $\bar{\partial}^{\ast}$ is the usual adjoint of the Dolbeault operator $\bar{\partial}$, and ${\Delta_{\bar{\partial}}=\bar{\partial}\bar{\partial}^{\ast}+\bar{\partial}^{\ast}\bar{\partial}}$.\\ 
\\
If $h\in \varepsilon(g)$, the path $g(t)=g+th$ solves the Einstein equation to first order at $t=0$ (i.e. $h$ solves the linearised Einstein equation). For a given EID, Koiso developed an obstruction to being able to produce a curve of Einstein metrics solving the Einstein equations to second order; again, for brevity, we only state the obstruction.

\begin{lemma}[Koiso, Lemma 4.3 in \cite{KoiOsaka2}]
   	Let $(\mathcal{M},g)$ be an Einstein metric with Einstein constant $\lambda>0$ and let $h \in \varepsilon(g)$. Then an obstruction to the integrability of $h$ to order two is given by the nonvanishing of the quantity
   	\begin{small}
   		\begin{equation}\label{eqn:Koiobs}
   		\mathcal{I}(h) :=2\lambda\langle h_{i}^{k}h_{kj},h_{ij}\rangle_{L^{2}}+3\langle\nabla_{i}\nabla_{j}h_{kl},h_{ij}h_{kl} \rangle_{L^{2}}-6\langle\nabla_{i}\nabla_{l}h_{kj},h_{ij}h_{kl}\rangle_{L^{2}},  
   		\end{equation}
   	\end{small}
   	where each of the brackets denotes the $L^{2}$-inner product induced by the metric $g$ on the appropriate bundle.
   \end{lemma}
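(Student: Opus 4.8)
The plan is to obtain $\mathcal{I}(h)$ as the necessary condition for solving the Einstein equation to second order along a formal deformation, and then to identify that condition with the three displayed integrals. First I would posit a formal curve of metrics $g(t)=g+th+t^{2}\ell+O(t^{3})$ and of Einstein constants $\lambda(t)=\lambda+t\mu+t^{2}\rho+\cdots$, and expand $\Ric(g(t))=\lambda(t)\,g(t)$ in powers of $t$. The order-zero term is the hypothesis $\Ric(g)=\lambda g$. At order one one finds $\Ric'_{g}(h)=\mu g+\lambda h$; taking the trace and using \eqref{eqn:trace_free} and \eqref{div_free} (so that the linearised scalar curvature $\tr_{g}\Ric'_{g}(h)$ vanishes) forces $\mu=0$, and the remaining equation $\Ric'_{g}(h)=\lambda h$ is precisely the linearised Einstein equation \eqref{Lin_Ein} rewritten for the transverse-traceless tensor $h$, hence automatically satisfied since $h\in\varepsilon(g)$.

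The heart of the matter is the order-two identity, which reads
\[
\tfrac{1}{2}\,\Ric''_{g}(h,h)+\big(\Ric'_{g}(\ell)-\lambda\ell\big)=\rho\,g,
\]
where $\Ric''_{g}(h,h)$ denotes the second variation of the Ricci tensor. To extract the obstruction I would take the $L^{2}$-pairing of this identity with $h$. The right-hand side contributes nothing because $\langle g,h\rangle_{L^{2}}=\int_{\mathcal{M}}\tr_{g}h\,dV=0$ by \eqref{eqn:trace_free}. The middle term also vanishes: writing $\Ric'_{g}(\ell)$ in the standard form whose leading part is the self-adjoint Lichnerowicz Laplacian $\tfrac{1}{2}\Delta_{L}\ell$ together with lower-order terms built from $\div(\ell)$ and $\tr_{g}\ell$, the latter terms die upon integrating by parts against $\div(h)=0$ from \eqref{div_free}, while the Lichnerowicz term pairs as $\tfrac{1}{2}\langle\ell,\Delta_{L}h\rangle_{L^{2}}=\lambda\langle\ell,h\rangle_{L^{2}}$ by \eqref{Lin_Ein}, exactly cancelling the $-\lambda\langle\ell,h\rangle_{L^{2}}$ coming from $-\lambda\ell$. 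What remains is the necessary condition $\langle\Ric''_{g}(h,h),h\rangle_{L^{2}}=0$, so that nonvanishing of $\langle\Ric''_{g}(h,h),h\rangle_{L^{2}}$ obstructs integrability to second order. This step uses only properties of $h$, so no gauge-fixing of $\ell$ is needed.

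It then remains to evaluate $\langle\Ric''_{g}(h,h),h\rangle_{L^{2}}$ and verify that it equals $\mathcal{I}(h)$ as defined in \eqref{eqn:Koiobs}. Starting from the variation formulae for the Christoffel symbols and the Ricci tensor, $\Ric''_{g}(h,h)$ is a sum of quadratic expressions of three types: $\nabla h\ast\nabla h$, $h\ast\nabla^{2}h$, and a curvature contraction $h\ast h\ast\Riem$. Pairing with $h$ and integrating by parts over the closed manifold $\mathcal{M}$ turns these into cubic integrals; the transverse-traceless conditions \eqref{eqn:trace_free} and \eqref{div_free} annihilate every term in which a derivative lands on a trace or a divergence, collapsing the list to the two second-derivative contractions $\langle\nabla_{i}\nabla_{j}h_{kl},h_{ij}h_{kl}\rangle_{L^{2}}$ and $\langle\nabla_{i}\nabla_{l}h_{kj},h_{ij}h_{kl}\rangle_{L^{2}}$, while the curvature contraction is rewritten using $2\Riem(h)=-\Delta h$ from \eqref{Lin_Ein} and one further integration by parts to produce the cubic term $\langle h_{i}^{k}h_{kj},h_{ij}\rangle_{L^{2}}$ carrying the factor of $\lambda$.

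I expect the main obstacle to be exactly this last reduction: recording the correct form of $\Ric''_{g}(h,h)$ with all of its constants and then carrying out the integrations by parts so that the numerous second-derivative and quadratic-curvature terms combine into precisely the three displayed contractions with coefficients $2\lambda$, $3$ and $-6$. Each individual manipulation is routine --- an integration by parts followed by an application of \eqref{eqn:trace_free}, \eqref{div_free}, or \eqref{Lin_Ein} --- but the bookkeeping of index contractions and signs is delicate, and it is here that Koiso's original feat of computation is concentrated.
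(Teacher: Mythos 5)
The paper does not prove this lemma at all --- it is quoted from Koiso (Lemma 4.3 of \cite{KoiOsaka2}) precisely so that the explicit formula can be taken as a black box --- so there is no in-paper argument to compare yours against. Your functional-analytic setup is the standard (and indeed Koiso's) route and is sound: expand $\Ric(g(t))=\lambda(t)g(t)$, use \eqref{eqn:trace_free} to kill $\mu$ at first order, and pair the second-order identity with $h$ so that $\rho g$ dies by \eqref{eqn:trace_free} and $\Ric'_g(\ell)-\lambda\ell$ dies by self-adjointness of the linearised operator together with \eqref{div_free} and \eqref{Lin_Ein}, leaving $\langle\Ric''_g(h,h),h\rangle_{L^2}=0$ as the necessary condition. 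You are also right that no gauge-fixing of $\ell$ is needed for this step.

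The gap is that the entire content of the stated lemma is the explicit formula --- that $\langle\Ric''_g(h,h),h\rangle_{L^2}$ equals a fixed nonzero multiple of $2\lambda\langle h_i^kh_{kj},h_{ij}\rangle_{L^2}+3\langle\nabla_i\nabla_jh_{kl},h_{ij}h_{kl}\rangle_{L^2}-6\langle\nabla_i\nabla_lh_{kj},h_{ij}h_{kl}\rangle_{L^2}$ --- and you do not establish it: you only name the types of terms appearing in $\Ric''_g(h,h)$ and defer the reduction. Without the coefficients $2\lambda$, $3$, $-6$ the lemma is not proved, and those coefficients are exactly what the rest of the paper consumes. Moreover, the one concrete mechanism you do propose for the $\lambda$-term is doubtful: substituting $2\Riem(h)=-\Delta h$ from \eqref{Lin_Ein} into the curvature contraction and integrating by parts once produces cubic integrals of the type $\nabla h\ast\nabla h\ast h$, not $\lambda\langle h_i^kh_{kj},h_{ij}\rangle_{L^2}$. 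The factor of $\lambda$ in Koiso's formula in fact enters through Ricci contractions ($\Ric=\lambda g$) generated when covariant derivatives are commuted so as to exploit $\div(h)=0$, and through the explicit Ricci terms already present in the second variation. So the final reduction is not merely ``delicate bookkeeping'' along the lines you indicate; as described it would not assemble into the displayed formula, and this is precisely where the proof of the lemma lives.
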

\begin{remark}
Nagy and Semmelmann \cite{NagSem23} have reformulated the obstruction (\ref{eqn:Koiobs}) in a coordinate free manner using the Fr\"olicher--Nijenhuis bracket; one could also use this approach to compute $\mathcal{I}(h)$ and reprove Koiso's Theorem.
\end{remark}
\subsection{The infinitesimal deformations for $\mathbb{CP}^{n}\times \mathbb{CP}^{1}$}

Koiso completely characterised the infinitesimal deformations of symmetric spaces. In particular, he proved that for the product metric $g_{1}\oplus g_{0}$ on  $\mathbb{CP}^{n}\times \mathbb{CP}^{1}$,
\[
\varepsilon(g_{1}\oplus g_{0})\cong \mathfrak{su}_{n+1}.
\]
Of course $\mathfrak{su}_{n+1} \cong \mathfrak{iso}(g_{1})$ and, as $g_{1}$ is a K\"ahler-Einstein metric, we also have by Matsushima's Theorem $\mathfrak{iso}(g_{1})\cong E_{1}$, where $E_{1}$ is the eigenspace of the smallest non-zero eigenvalue of the ordinary Laplacian; here the smallest eigenvalue will be $2\lambda$ where $\lambda$ is the Einstein constant. We give an explicit description of the EIDs in terms of the eigenfunctions and include a brief proof that the tensors satisfy the Equations (\ref{eqn:HD1}), (\ref{eqn:HD2}), and (\ref{eqn:HD3}). However, we will omit the proof that these are \textit{all} the EIDs.

\begin{proposition}[cf. Koiso \cite{KoiOsaka2}, Theorem 5.7]
Let $g=g_{1}\oplus g_{0}$ be the product K\"ahler--Einstein metric on $\mathbb{CP}^{n}\times \mathbb{CP}^{1}$ with Einstein constant $\lambda>0$ and let $f$ be an eigenfunction of the ordinary Laplacian on $(\mathbb{CP}^{n},g_{1})$ with eigenvalue $2\lambda$. Then 
\[
h = \mathrm{Hess}(f)+\lambda f{g}_{1} + (1-n)\lambda f g_{0}
\] 	
is an EID.	
\end{proposition}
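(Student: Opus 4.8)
The plan is to check the three Dolbeault conditions (\ref{eqn:HD1}), (\ref{eqn:HD2}), (\ref{eqn:HD3}) for the $(1,1)$-form $\sigma$ associated to $h$, which by the equivalence recorded above is the same as checking that $h$ is an EID --- \emph{provided} $h$ is invariant under the product complex structure $J=J_1\oplus J_0$. Establishing this $J$-invariance is, to my mind, the conceptual heart of the matter, and it is exactly where the eigenvalue hypothesis enters. The summands $\lambda f g_1$ and $(1-n)\lambda f g_0$ are manifestly $J$-invariant since $g_1$ and $g_0$ are. For $\Hess(f)$ I would invoke the Lichnerowicz--Matsushima correspondence alluded to in the introduction: on the compact K\"ahler--Einstein factor $(\mathbb{CP}^n,g_1)$, the condition that $f$ be an eigenfunction with the \emph{first} nonzero eigenvalue $2\lambda$ is equivalent to the complex gradient $\partial^{\sharp}f$ being a holomorphic vector field, i.e.\ to the vanishing of the $(0,2)$- and $(2,0)$-parts of $\nabla\,df$. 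Hence $\Hess(f)$ is $J$-invariant and its associated form $\sigma_{\Hess}$ is a constant multiple of $i\partial\bar{\partial}f$.

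With $\sigma = \sigma_{\Hess} + \lambda f\,\omega_1 + (1-n)\lambda f\,\omega_0$ in hand, where $\omega_1,\omega_0$ are the K\"ahler forms of the two factors, condition (\ref{eqn:HD1}) reduces to the trace-free condition (\ref{eqn:trace_free}), because $\Lambda\sigma$ is a fixed multiple of $\tr_g h$. I would compute $\tr_g h = \tr_{g_1}\Hess(f) + 2n\lambda f + 2(1-n)\lambda f$, and since the metric trace of the Hessian equals $-2\lambda f$ (minus the ordinary Laplacian), the three contributions cancel; this is precisely what pins down the coefficients $\lambda$ and $(1-n)\lambda$. For (\ref{eqn:HD2}) I would verify the equivalent real identity $\div(h)=0$: the Ricci commutation formula on the Einstein factor gives $\div(\Hess f) = d(\tr_{g_1}\Hess f) + \Ric(\nabla f,\,\cdot\,) = -2\lambda\,df + \lambda\,df = -\lambda\,df$, while $\div(f g_1)=df$ and $\div(f g_0)=0$ because $g_1,g_0$ are parallel and $f$ is constant along the $\mathbb{CP}^1$ factor, so that $\div(h) = -\lambda\,df + \lambda\,df + 0 = 0$. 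On a K\"ahler manifold the vanishing of the codifferential of the $(1,1)$-form $\sigma$ then forces $\bar{\partial}^{\ast}\sigma=0$ by type.

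The eigenvalue equation (\ref{eqn:HD3}) is where the K\"ahler identities pay off, and I expect it to be the easiest of the three once the framework is set up: I claim each summand of $\sigma$ is a $\lambda$-eigenform of $\Delta_{\bar{\partial}}$. Since $f$ has ordinary-Laplacian eigenvalue $2\lambda$ and $\Delta_{\bar{\partial}}$ is half the Hodge--de Rham Laplacian, $\Delta_{\bar{\partial}}f=\lambda f$; because $\Delta_{\bar{\partial}}$ commutes with $\partial$ and $\bar{\partial}$ on a K\"ahler manifold, $\Delta_{\bar{\partial}}(i\partial\bar{\partial}f)=i\partial\bar{\partial}(\Delta_{\bar{\partial}}f)=\lambda\,i\partial\bar{\partial}f$, which disposes of $\sigma_{\Hess}$. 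For the remaining terms I would use that $\omega_1$ and $\omega_0$ are parallel, hence their Weitzenb\"ock curvature term vanishes, giving $\Delta_{\bar{\partial}}(f\omega) = \tfrac12\nabla^{\ast}\nabla(f\omega) = \tfrac12(\nabla^{\ast}\nabla f)\,\omega = \lambda f\,\omega$ for $\omega\in\{\omega_1,\omega_0\}$. Summing the three $\lambda$-eigenforms yields $\Delta_{\bar{\partial}}\sigma=\lambda\sigma$. The only genuine care required throughout is bookkeeping the normalisation constants relating $\sigma$, $\tr_g h$ and $i\partial\bar{\partial}f$, and confirming that every operator respects the product splitting so that the $\mathbb{CP}^1$ factor enters only through the parallel form $\omega_0$.
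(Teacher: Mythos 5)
Your proposal is correct, and it follows the same overall strategy as the paper (verify the trace, codifferential, and eigenvalue conditions for the associated $(1,1)$-form), but two of the three steps are carried out by genuinely different means. First, you make explicit a point the paper's proof leaves implicit: that $\mathrm{Hess}(f)$ is $J$-invariant, equivalently that its $(2,0)$-part vanishes, which is exactly the Lichnerowicz--Matsushima characterisation of first-eigenvalue eigenfunctions on a positive K\"ahler--Einstein manifold. The paper simply writes $\sigma = i\partial\bar{\partial}f + \cdots$ (and later records $\nabla_{k}\nabla_{l}f_{\gamma}=0$ in coordinates), so your observation is a welcome justification of why the eigenvalue hypothesis is needed, not merely a restatement. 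Second, for the coclosedness you verify the real identity $\div(h)=0$ via the contracted Ricci commutation formula $\div(\mathrm{Hess}\,f)=d(\mathrm{tr}\,\mathrm{Hess}\,f)+\mathrm{Ric}(\nabla f,\cdot)$ and then pass to $\bar{\partial}^{\ast}\sigma=0$ by type decomposition of $d^{\ast}\sigma$, whereas the paper computes $\bar{\partial}^{\ast}\sigma$ directly using the K\"ahler identities $\bar{\partial}^{\ast}\partial=-\partial\bar{\partial}^{\ast}$ and $[\bar{\partial}^{\ast},L]=i\partial$; both routes are equally short here, yours being more Riemannian and the paper's more complex-geometric. Third, for the eigenvalue equation you invoke the Weitzenb\"ock formula for $f\omega$ (the curvature term kills $\omega$ since a parallel form is harmonic, and it acts pointwise) where the paper commutes $\Delta_{\bar{\partial}}$ with the Lefschetz operator; again both are valid. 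The only places requiring care in your version are the ones you already flag: the normalisation relating $\mathrm{Hess}(f)$, $i\partial\bar{\partial}f$ and $\Lambda\sigma$ versus $\mathrm{tr}_{g}h$, and the sign convention in the Bochner identity (with the paper's convention $\mathrm{tr}_{g_{1}}\mathrm{Hess}\,f=-2\lambda f$, so that $\div(\mathrm{Hess}\,f)=-2\lambda\,df+\lambda\,df=-\lambda\,df$ as you state).
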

\begin{proof}
To check Equation (\ref{eqn:trace_free})	(or (\ref{eqn:HD1})) we take the trace of $h$
\[
\mathrm{tr}_{g}(h) = -2\lambda f+2n\lambda f+2(1-n)\lambda f = 0.
\]	
For (\ref{eqn:HD2}), let $\sigma$ be the $(1,1)$-form associated to $h$; if the K\"ahler forms of $g_{1}$ and $g_{0}$ are $\omega_{1}$ and $\omega_{0} $, then  
\[
\sigma = i\partial\bar{\partial}{f}+\lambda f\omega_{1}+ (1-n)f\omega_{0}.
\] 
Taking the codifferential yields
\[
\bar{\partial}^{\ast}\sigma = i\bar{\partial}^{\ast}\partial\bar{\partial}{f} +\lambda\bar{\partial}^{\ast} (f\omega_{1})+ (1-n)\lambda \bar{\partial}^{\ast}(f\omega_{0}).
\]
If we denote the differential and codifferential operators associated to the metrics $g_{j}$ by a subscript $j$, we have 
\[
\bar{\partial}^{\ast}\sigma =  i\bar{\partial}^{\ast}_{1}\partial_{1}\bar{\partial}_{1}{f} +\lambda\bar{\partial}^{\ast}_{1}(f\omega_{1})+ (1-n)\lambda f \bar{\partial}^{\ast}_{0}(\omega_{0}).
\]
Let $L(-) = -\wedge \omega$ denote the usual Lefschetz operator. Using the K\"ahler identities $\bar{\partial}^{\ast}\partial=-\partial\bar{\partial}^{\ast}$ and $[\bar{\partial}^{\ast},L]=i\partial$, we compute
\[
\bar{\partial}^{\ast}\sigma = -i\partial_{1} \bar{\partial}^{\ast}_{1}\bar{\partial}_{1}f + i\lambda\partial_{1} f + 0 = -i\lambda \partial_{1} f +i\lambda \partial_{1} f+0=0, 
\]
where we have used $ \bar{\partial}^{\ast}_{1}\bar{\partial}_{1}f = \frac{1}{2}\Delta_{1}f=\lambda f$.\\
\\
For Equation (\ref{eqn:HD3}) we can compute
\[
\Delta_{\bar{\partial}}\sigma = \Delta_{\bar{\partial}_{1}}(i\partial_{1}\bar{\partial}_{1}f)+\lambda \Delta_{\bar{\partial}_{1}}(L_{1}(f))+(1-n)\lambda\Delta_{\bar{\partial}}(f\omega_{0})
\]
Applying the K\"ahler identities (the Laplacian commutes with every operator) along with the fact that, for any smooth function on the first factor $F:\mathbb{CP}^{n}\rightarrow \mathbb{R}$,
\[
\Delta_{\bar{\partial}}(F\omega_{0}) = (\Delta_{\bar{\partial}_{1}}F)\omega_{0},
\]
yields the result.
\end{proof}

\section{Objects in local coordinates}
We consider $\mathbb{CP}^{n}$ with the standard dense open set $U_{0}$, described in homogeneous coordinates by 
\[
U_{0}=\left\{[z_{0}:z_{1}:\ldots:z_{n}] \in \mathbb{CP}^{n} :z_{0} \neq 0 \right\}.
\]
 Let ${\pi:U_{0}\rightarrow \mathbb{C}^{n}}$ be the coordinate chart given by
\[
\pi([z_{0}:z_{1}:\ldots:z_{n}]) = \left( \frac{z_{1}}{z_{0}}, \frac{z_{2}}{z_{0}}, \ldots, \frac{z_{n}}{z_{0}} \right),
\]
and denote the corresponding holomorphic coordinates ${(w_{1},w_{2},\ldots, w_{n})\in \mathbb{C}^{n}}$, where ${w_{i}=z_{i}/z_{0}}$. We will use the notation
\[
\|w\|^{2} = w_{1}^{2}+w_{2}^{2}+\cdots +w_{n}^{2}.
\]
We can write the metric and some associated geometric objects in the $w$ coordinates. This material is completely standard and so we omit the proof.
\begin{lemma}[Metric objects in coordinates]
In the $w$ coordinates, the Fubini--Study metric $g$ is given by
\begin{equation}\label{eqn:FS_met}
g_{k\bar{l}} = \frac{1}{1+\|w\|^{2}}\left(\delta_{kl}-\frac{\overline{w}_{k}w_{l}}{1+\|w\|^{2}} \right);
\end{equation} 
the inverse metric $g^{-1}$ (in the sense that ${g^{k\bar{l}}g_{j\bar{l}} = \delta_{kj}}$) is given by
\begin{equation}\label{eqn:FS_inv}
g^{k\bar{l}} = \left(1+\|w\|^{2}\right)\left( \delta_{kl} +w_{k}\overline{w}_{l}\right);
\end{equation}
the volume form $dV_{g}$ is given by
\begin{equation}\label{eqn:volform}
dV_{g} = \frac{1}{(1+\|w\|^{2})^{n+1}}\left(\frac{\sqrt{-1}}{2}\right)^{n}dw_{1}\wedge d\bar{w}_{1}\wedge dw_{2}\wedge d\bar{w}_{2}\wedge\cdots\wedge dw_{n}\wedge d\bar{w}_{n};
\end{equation}
the Christoffel symbols are given by
\begin{equation}\label{eqn:Christ}
\Gamma_{ij}^{k} = -\frac{1}{1+\|w\|^{2}}\left(\delta_{jk}\overline{w}_{i}+\delta_{ik}\overline{w}_{j} \right);
\end{equation}
the curvature tensor is given by
\begin{equation}\label{eqn:curvten}
\mathrm{Rm}^{\;j\;l}_{i\;k} =  (\delta_{i}^{j}\delta_{k}^{l}+\delta_{i}^{l}\delta_{k}^{j});
\end{equation}
the Ricci tensor is given by
\begin{equation}
\mathrm{Ric}(g)_{k\bar{l}} = (n+1)g_{k\bar{l}}.	
\end{equation}	
\end{lemma}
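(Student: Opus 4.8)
The plan is to derive all six formulae directly from the fact that, on the chart $U_{0}$, the Fubini--Study metric admits the global K\"ahler potential $\phi=\log(1+\|w\|^{2})$, so that $g_{k\bar{l}}=\partial_{k}\partial_{\bar{l}}\phi$. Writing $u=1+\|w\|^{2}$ and recording $\partial_{\bar{l}}u=w_{l}$, $\partial_{k}u=\overline{w}_{k}$, a single differentiation gives
\[
g_{k\bar{l}}=\partial_{k}\frac{w_{l}}{u}=\frac{\delta_{kl}}{u}-\frac{\overline{w}_{k}w_{l}}{u^{2}},
\]
which is (\ref{eqn:FS_met}). The structural observation that makes everything else routine is that $g_{k\bar{l}}$ is $u^{-1}$ times a rank-one perturbation of the identity, $\mathbb{I}-u^{-1}\,\overline{w}\otimes w$.

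For the inverse metric I would verify (\ref{eqn:FS_inv}) by a direct contraction: applying the Sherman--Morrison formula to the rank-one perturbation reduces the identity $g^{k\bar{l}}g_{j\bar{l}}=\delta_{kj}$ to the scalar relation $u-\|w\|^{2}=1$. The same rank-one structure handles the volume form via the matrix determinant lemma, which gives $\det(g_{k\bar{l}})=u^{-n}\bigl(1-\|w\|^{2}/u\bigr)=u^{-(n+1)}$; multiplying the flat Euclidean volume form by this determinant yields (\ref{eqn:volform}). Since $\det(g_{k\bar{l}})=e^{-(n+1)\phi}$, the Ricci formula then drops out immediately from the K\"ahler identity $\Ric_{k\bar{l}}=-\partial_{k}\partial_{\bar{l}}\log\det(g)=(n+1)\,\partial_{k}\partial_{\bar{l}}\phi=(n+1)g_{k\bar{l}}$, so no separate curvature computation is needed to obtain it.

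The Christoffel symbols follow from the K\"ahler formula $\Gamma^{k}_{ij}=g^{k\bar{l}}\partial_{i}g_{j\bar{l}}$ (in K\"ahler geometry only the purely holomorphic symbols and their conjugates are nonzero). Differentiating $g_{j\bar{l}}$ in $w_{i}$ and contracting with (\ref{eqn:FS_inv}), the rank-one pieces again collapse to give (\ref{eqn:Christ}). For the curvature (\ref{eqn:curvten}) I would avoid the general-point computation and instead exploit homogeneity: since $SU_{n+1}$ acts transitively by isometries, both sides of the covariant identity $R_{i\bar{j}k\bar{l}}=g_{i\bar{j}}g_{k\bar{l}}+g_{i\bar{l}}g_{k\bar{j}}$ are isometry-invariant tensors, so it suffices to check it at the single point $w=0$. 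There $g=\mathbb{I}$ and all first derivatives of the metric vanish, so the curvature reduces to $-\partial_{i}\partial_{\bar{j}}g_{k\bar{l}}\big|_{0}$; this fourth derivative of $\phi$ evaluates to $-(\delta_{ij}\delta_{kl}+\delta_{il}\delta_{kj})$, giving $R_{i\bar{j}k\bar{l}}|_{0}=\delta_{ij}\delta_{kl}+\delta_{il}\delta_{kj}$, which agrees with $g_{i\bar{j}}g_{k\bar{l}}+g_{i\bar{l}}g_{k\bar{j}}$ there. Raising the two barred indices with the metric then produces (\ref{eqn:curvten}).

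The computations are all elementary; the only place where a naive approach becomes genuinely heavy is the curvature tensor, where differentiating the Christoffel symbols of (\ref{eqn:Christ}) at a general point produces many terms. The main point of the proposed argument is that the rank-one structure of $g_{k\bar{l}}$ and the homogeneity of the Fubini--Study metric circumvent this: every formula is obtained either by one differentiation of the potential, one application of a standard rank-one matrix identity, or a single evaluation at the origin.
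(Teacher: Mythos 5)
Your derivation is correct, and there is nothing in the paper to compare it against: the lemma is stated with the remark that the material is ``completely standard'' and the proof is omitted. Your route --- the K\"ahler potential $\phi=\log(1+\|w\|^{2})$ together with the Sherman--Morrison and matrix-determinant identities for the rank-one structure, the formula $\mathrm{Ric}_{k\bar{l}}=-\partial_{k}\partial_{\bar{l}}\log\det(g)$, and the homogeneity argument reducing the curvature identity to a single check at $w=0$ --- is exactly the standard argument the author is implicitly invoking, and each step checks out (note only that the paper's definition $\|w\|^{2}=w_{1}^{2}+\cdots+w_{n}^{2}$ is a typo for $\sum_i |w_i|^2$, which is what you correctly use when writing $\partial_{k}u=\overline{w}_{k}$).
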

If we denote $E_{1}$ the space of eigenfunctions for the ordinary Laplacian with eigenvalue $2(n+1)$, there is an isomorphism ${F:\mathfrak{su}(n+1)\rightarrow E_{1}}$ given in coordinates by
\[
F(\eta)(w) = \frac{\overline{W}\eta W^{t}}{1+\|w\|^{2}},   
\]
where $W=(1,w_{1},w_{2},\ldots,w_{n})$. Where revelvant, for $\eta \in \mathfrak{g}$, we will denote the eigenfunction $F(\eta)$ by $f_{\eta}$. Once and for all, we fix $\gamma \in \mathfrak{su}(n+1)$ to be
\begin{equation}\label{Eqn:Gamma_choice}
\gamma=\mathrm{Diag}(-n,\underbrace{1,1,\ldots,1}_{n \ \mathrm{terms}}).
\end{equation}
 We collect the coordinate expressions for objects associated with $f_{\gamma}$, leaving proofs to the reader.
\begin{lemma}[Objects associated with $f_{\gamma}$ in coordinates]
In the $w$ coordinates, the eigenfunction $f_{\gamma}$ is given by
\begin{equation}\label{eqn:eigenfunc}
f_{\gamma}(w) = \frac{\|w\|^{2}-n}{1+\|w\|^{2}};
\end{equation}
the derivative $\partial f_{\gamma}$ is given by
\begin{equation}
\left(\partial f_{\gamma}\right)_{k} = (n+1)\frac{\overline{w}_{k}}{(1+\|w\|^{2})^{2}};
\end{equation}
the Hessian of $f_{\gamma}$ is given in complex coordinates by
\begin{equation} \label{eqn:Hess_com_1} 
\nabla_{k}\nabla_{\bar{l}}f_{\gamma} = \frac{n+1}{(1+\|w\|^{2})^{2}}\left(\delta_{kl}-\frac{2\overline{w}_{k}w_{l}}{1+\|w\|^{2}} \right) = \frac{n+1}{1+\|w\|^{2}}\left(2g_{k\bar{l}}-\frac{\delta_{kl}}{1+\|w\|^{2}}\right), 
\end{equation}
\begin{equation}\label{eqn:Hess_com_2} 
 \nabla_{k}\nabla_{l}f_{\gamma} = 0;    
\end{equation}
the covariant derivative of the Hessian is given by
\begin{equation}\label{eqn:3Der}
\nabla_{\bar{q}}\nabla_{k}\nabla_{\bar{l}}f_{\gamma} = -\left(g_{k\bar{l}}(\partial f_{\gamma})_{\bar{q}}+g_{k\bar{q}}(\partial f_{\gamma})_{\bar{l}}\right);
\end{equation}
the four derivative term needed in Koiso's obstruction is given by
\begin{equation}\label{eqn:4Der}
\nabla_{{p}}\nabla_{\bar{q}}\nabla_{k}\nabla_{\bar{l}}f_{\gamma} =-(g_{k\bar{l}}\nabla_{p}\nabla_{\bar{q}}f_{\gamma}+g_{k\bar{q}}\nabla_{p}\nabla_{\bar{l}}f_{\gamma}).
\end{equation}	
\end{lemma}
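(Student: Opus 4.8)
The plan is to verify each identity by direct computation in the $w$-coordinates, exploiting the Kähler structure to strip the covariant derivatives down to ordinary partials wherever possible. Throughout I would write $\rho = 1+\|w\|^{2}$ and record the elementary facts $\partial_{k}\rho = \overline{w}_{k}$, $\partial_{\bar{q}}\rho = w_{q}$, $\partial_{\bar{q}}\overline{w}_{k}=\delta_{kq}$. The eigenfunction formula \eqref{eqn:eigenfunc} comes straight from substituting $\gamma$ into $F(\gamma)(w)=\overline{W}\gamma W^{t}/\rho$: since $\gamma W^{t}=(-n,w_{1},\dots,w_{n})^{t}$, the numerator collapses to $\|w\|^{2}-n$. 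The first derivative then follows from the quotient rule. For the mixed Hessian \eqref{eqn:Hess_com_1} the key simplification is that on a Kähler manifold the mixed second covariant derivative carries no Christoffel correction, because $\Gamma^{\bar{m}}_{k\bar{l}}=0$; hence $\nabla_{k}\nabla_{\bar{l}}f_{\gamma}=\partial_{k}\partial_{\bar{l}}f_{\gamma}$, which I would differentiate directly and then rewrite in terms of $g_{k\bar{l}}$ using \eqref{eqn:FS_met} to recover the second form. The vanishing of the holomorphic Hessian \eqref{eqn:Hess_com_2} I would obtain either from the brute-force cancellation $\partial_{k}\partial_{l}f_{\gamma}=\Gamma^{m}_{kl}\partial_{m}f_{\gamma}$ using \eqref{eqn:Christ}, or more conceptually by noting that $f_{\gamma}$ is a lowest-eigenvalue eigenfunction on a Kähler--Einstein manifold, so its $(1,0)$-gradient is a holomorphic vector field and the holomorphic Hessian vanishes automatically.

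The substantive step is the third-derivative identity \eqref{eqn:3Der}, and I see two routes. The direct route differentiates the mixed Hessian $H_{k\bar{l}}=\nabla_{k}\nabla_{\bar{l}}f_{\gamma}$ covariantly: since the only contributing Christoffel is the all-antiholomorphic $\Gamma^{\bar{p}}_{\bar{q}\bar{l}}$ (the conjugate of \eqref{eqn:Christ}, as the mixed $\Gamma^{p}_{\bar{q}k}$ vanishes), one has $\nabla_{\bar{q}}H_{k\bar{l}}=\partial_{\bar{q}}H_{k\bar{l}}-\Gamma^{\bar{p}}_{\bar{q}\bar{l}}H_{k\bar{p}}$, and after expanding, the $\rho^{-3}$ and $\rho^{-4}$ terms reorganise into exactly $-(g_{k\bar{l}}(\partial f_{\gamma})_{\bar{q}}+g_{k\bar{q}}(\partial f_{\gamma})_{\bar{l}})$.

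The cleaner route uses the Ricci identity together with the two vanishing Hessians. Since $f_{\gamma}$ is real, the antiholomorphic Hessian $\nabla_{\bar{q}}\nabla_{\bar{l}}f_{\gamma}=\overline{\nabla_{q}\nabla_{l}f_{\gamma}}=0$, so when we commute $\nabla_{\bar{q}}$ past $\nabla_{k}$ in $\nabla_{\bar{q}}\nabla_{k}(\partial_{\bar{l}}f_{\gamma})$ the reordered term $\nabla_{k}\nabla_{\bar{q}}(\partial_{\bar{l}}f_{\gamma})$ vanishes, leaving a single curvature contraction of the schematic form $R\cdot\partial f_{\gamma}$. Feeding in the constant-holomorphic-sectional-curvature form of the Fubini--Study curvature encoded in \eqref{eqn:curvten}, namely $R_{i\bar{j}k\bar{l}}=g_{i\bar{j}}g_{k\bar{l}}+g_{i\bar{l}}g_{k\bar{j}}$, turns that contraction into the symmetric combination $g_{k\bar{l}}(\partial f_{\gamma})_{\bar{q}}+g_{k\bar{q}}(\partial f_{\gamma})_{\bar{l}}$, and tracking the sign gives \eqref{eqn:3Der}. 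I would favour this second route, since it explains the \emph{structure} of the right-hand side rather than presenting it as the output of an algebraic coincidence.

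Finally, \eqref{eqn:4Der} follows formally by covariantly differentiating the tensor identity \eqref{eqn:3Der} with $\nabla_{p}$: metric compatibility $\nabla g=0$ lets $\nabla_{p}$ pass through the factors of $g$, and $\nabla_{p}(\partial f_{\gamma})_{\bar{q}}=\nabla_{p}\nabla_{\bar{q}}f_{\gamma}$ is again the mixed Hessian, producing $-(g_{k\bar{l}}\nabla_{p}\nabla_{\bar{q}}f_{\gamma}+g_{k\bar{q}}\nabla_{p}\nabla_{\bar{l}}f_{\gamma})$ with no further calculation. I expect the only genuine obstacle to be the third-derivative step: whichever route one takes, the bookkeeping of holomorphic versus antiholomorphic indices and the precise curvature sign conventions must be handled with care, since an error there would propagate directly into the cubic obstruction $\mathcal{I}(h)$ computed in the following section.
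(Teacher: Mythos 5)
Your proposal is correct: the paper explicitly leaves this lemma ``to the reader,'' and its implicit method is exactly the direct coordinate verification you describe (I have checked that both of your routes to \eqref{eqn:3Der} --- the brute-force expansion with the antiholomorphic Christoffel symbol, and the Ricci-identity argument using $\nabla_{\bar q}\nabla_{\bar l}f_{\gamma}=0$ together with $R_{i\bar j k\bar l}=g_{i\bar j}g_{k\bar l}+g_{i\bar l}g_{k\bar j}$ --- produce the stated right-hand side, and that \eqref{eqn:4Der} then follows by differentiating the tensor identity \eqref{eqn:3Der} using $\nabla g=0$). The Ricci-identity route is a worthwhile addition since it explains why the answer is a symmetrization of $g\otimes\partial f_{\gamma}$ rather than an algebraic accident.
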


Finally in this section, we note that we can write the terms in Koiso's obstruction (\ref{eqn:Koiobs}) in complex coordinates but this will introduce some conversion factors in the terms. We refer the reader to \cite{Hall24} for a proof of this Lemma.  The tangent space at a point is an inner product space $(V^{2n},g)$ with an almost complex structure $J$ such that $g(J\cdot,J\cdot)=g(\cdot,\cdot)$. We complexify $V$ and extend the tensors $g$ and $h$ $\mathbb{C}$-linearly in both arguments to obtain a $2n$-complex-dimensional space $V_{\mathbb{C}}$ and tensors $g_{\mathbb{C}}$ and $h_{\mathbb{C}}$. The space $V_{\mathbb{C}}$ splits into the $\pm \sqrt{-1}$-eigenspaces for $J$ and we write ${V_{\mathbb{C}}=V_{\mathbb{C}}^{(1,0)}\oplus V_{\mathbb{C}}^{(0,1)}}$.\\
\\
Given a $g$-orthonormal basis of $V$ of the form $v_{1},Jv_{1},v_{2},Jv_{2},\ldots, v_{n},Jv_{n}$, we can form the basis $\{e_{i}\}_{i=1}^{n}$ of $V^{(1,0)}_{\mathbb{C}}$ where   
\[
e_{i}=\frac{1}{2}\left(v_{i}-\sqrt{-1}Jv_{i} \right).
\]
This is  an orthogonal basis of $(V^{(1,0)}_{\mathbb{C}},g_{\mathbb{C}})$ with $\|e_{i}\|=1/2$. The set of conjugates 
\[
\bar{e}_{i}=\frac{1}{2}\left(v_{i}+\sqrt{-1}Jv_{i} \right),
\]
form a basis of $V^{(0,1)}_{\mathbb{C}}$. As the tensors $g$ and $h$ are $J$-invariant, the only non-vanishing terms of the extensions  $g_{\mathbb{C}}$ and $h_{\mathbb{C}}$ are those of the form 
\[
g_{k\bar{l}}:=g_{\mathbb{C}}(e_{k},\bar{e}_{l}) \qquad \mathrm{and} \qquad h_{k\bar{l}} = h_{\mathbb{C}}(e_{k},\bar{e}_{l}). 
\] 
We consider Koiso's quantities but in complex coordinates e.g.
\[
\langle h_{k}^{p}h_{p\bar{l}},h_{k\bar{l}} \rangle  = g^{k\bar{q}}g^{r\bar{l}}h_{k}^{p}h_{p\bar{l}}h_{r\bar{q}} = H_{k}^{p}H_{p}^{r}H_{r}^{k} = \tr(H^{3}).
\]
As the metric is K\"ahler, the Chern connection is the same as the $\mathbb{C}$-linear extension of the Levi-Civita connection. We also note that 
\[
(\nabla_{\cdot}\nabla_{\cdot}h)(JX,JY) = (\nabla_{\cdot}\nabla_{\cdot}h)(X,Y).
\]

\begin{lemma}\label{Lem:ComptoRiem}
	Let $h\in s^{2}(V^{\ast})$ be $J$-invariant and let $T\in (V^{\ast})^{\otimes 4}$ satisfy
	\[
	T(-,-,X,Y) = T(-,-,Y,X) \qquad \mathrm{and} \qquad T(-,-,JX,JY)=T(-,-,X,Y),
	\]
	for all $X,Y \in V$. Then, with the notation defined previously,
	\begin{eqnarray}
	\langle h_{kp}h^{p}_{l},h_{kl} \rangle = 2\langle h_{k\bar{p}}h^{\bar{p}}_{\bar{l}},h_{k\bar{l}} \rangle, \label{RiemCom_1}\\
	\langle T_{klrs},h_{kl}h_{rs} \rangle = 4 \mathrm{Re}(\langle T_{k\bar{l}r\bar{s}},h_{k\bar{l}}h_{r\bar{s}} \rangle),
	\label{RiemCom_2}\\
	\langle T_{krsl}, h_{kl}h_{rs} \rangle = 2 \mathrm{Re}(\langle T_{k\bar{r}s\bar{l}} ,h_{k\bar{l}}h_{s\bar{r}}\rangle). \label{RiemCom_3}
	\end{eqnarray}
	
\end{lemma}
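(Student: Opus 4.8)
The plan is to prove the three identities by a single pointwise recipe. Since every term is a tensorial contraction, it suffices to verify each identity at one point, where I would fix the $J$-adapted $g$-orthonormal basis $v_1, Jv_1, \ldots, v_n, Jv_n$ and pass between the real contractions and the complex ones via $e_i = \tfrac12(v_i - \sqrt{-1}Jv_i)$ together with the inverse relations $v_i = e_i + \bar e_i$ and $Jv_i = \sqrt{-1}(e_i - \bar e_i)$. The complex brackets on the right are $\mathbb{C}$-bilinear metric contractions, and hence complex-valued in general, which is the source of the $\mathrm{Re}(\cdots)$ appearing in (\ref{RiemCom_2}) and (\ref{RiemCom_3}).

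First I would record the dictionary between the real and complex components. Writing $A_{ij} = h(v_i, v_j)$ and $B_{ij} = h(v_i, Jv_j)$, the $J$-invariance $h(JX, JY) = h(X,Y)$ together with the symmetry of $h$ forces $A$ to be symmetric and $B$ antisymmetric, yields $h(e_k, e_l) = h(\bar e_k, \bar e_l) = 0$, and gives $h_{k\bar l} = \tfrac12(A_{kl} + \sqrt{-1}B_{kl}) = \tfrac12 C_{kl}$, where $C = A + \sqrt{-1}B$ is Hermitian; in particular $\overline{h_{k\bar l}} = h_{l\bar k}$. The same computation for $g$ gives $g_{k\bar l} = \tfrac12\delta_{kl}$ and $g^{k\bar l} = 2\delta_{kl}$, and these are the conversion factors that generate the numerical constants. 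For $T$ I would record the analogous consequences of its $J$-invariance and symmetry in the last two slots, namely the vanishing of its $(2,0)$- and $(0,2)$-type components there and the conjugation symmetry $\overline{T_{k\bar l r\bar s}} = T_{l\bar k s\bar r}$.

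With this dictionary each identity becomes a finite linear-algebra computation. For (\ref{RiemCom_1}) the left-hand side is the real trace $\tr(M^3)$ of the matrix
\[
M = \begin{pmatrix} A & B \\ -B & A \end{pmatrix}
\]
representing $h$ in the ordered basis $(v_1,\ldots,v_n,Jv_1,\ldots,Jv_n)$; this $M$ is the realification of the Hermitian matrix $\bar C$, so each (real) eigenvalue of $C$ occurs in $M$ with multiplicity two and $\tr(M^3) = 2\tr(C^3)$. Since the right-hand side equals $\tr(C^3)$, the factor $2$ follows. For (\ref{RiemCom_2}) and (\ref{RiemCom_3}) I would expand the real quartic contraction, substitute $v_i = e_i + \bar e_i$ and $Jv_i = \sqrt{-1}(e_i - \bar e_i)$, discard the terms annihilated by the vanishing of the $(2,0)$- and $(0,2)$-type components, and collect the survivors; the relation $\overline{h_{k\bar l}} = h_{l\bar k}$ and the corresponding symmetry of $T$ pair each surviving term with its complex conjugate, which is exactly what assembles into $\mathrm{Re}(\cdots)$. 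The distinct contraction patterns, $h_{kl}h_{rs}$ against $T_{klrs}$ in (\ref{RiemCom_2}) versus against $T_{krsl}$ in (\ref{RiemCom_3}), admit different numbers of type-balanced index assignments, and this is what produces the constant $4$ in the first case and $2$ in the second.

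I expect the main obstacle to be bookkeeping rather than conceptual difficulty. One must track the factors of $\tfrac12$ from the definition of $e_i$, the factors of $2$ from $g^{k\bar l} = 2\delta_{kl}$, and, most delicately, the reality structure, checking that the imaginary contributions in the manifestly real left-hand sums cancel so that the result is the stated real multiple of $\mathrm{Re}$ of the complex pairing. Pinning down the two quartic constants amounts to counting, separately for each contraction pattern, precisely which holomorphic/antiholomorphic index assignments are compatible with the vanishing of the unbalanced components of $h$ and $T$.
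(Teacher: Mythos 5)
The paper does not actually prove this lemma --- it states it and refers the reader to \cite{Hall24} --- so there is no internal proof to compare against. Your strategy is the natural one and it is sound: the dictionary $h_{k\bar l}=\tfrac12(A_{kl}+\sqrt{-1}B_{kl})$, $h(e_k,e_l)=0$, $g_{k\bar l}=\tfrac12\delta_{kl}$, $g^{k\bar l}=2\delta_{kl}$, $\overline{h_{k\bar l}}=h_{l\bar k}$ is correct, the eigenvalue-doubling argument gives (\ref{RiemCom_1}) cleanly (with $H=C^{T}$ so the complex bracket is $\tr(C^{3})$ and the real trace is $2\tr(C^{3})$), and for the quartic identities the mechanism you identify --- counting which holomorphic/antiholomorphic type assignments survive the vanishing of the $(2,0)$ and $(0,2)$ components --- is exactly what produces the constants. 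I have checked that carrying out your plan does yield $4$ and $2$.

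The one substantive caveat is that the decisive step is described but not executed, and for (\ref{RiemCom_2}) it involves slightly more than conjugation bookkeeping. In the pattern $T_{klrs}h^{kl}h^{rs}$ the two index pairs are constrained independently, so four type assignments survive; these pair off under conjugation into \emph{two} a priori distinct real parts, namely $\mathrm{Re}\sum T_{\bar m n\bar p q}h_{m\bar n}h_{p\bar q}$ and $\mathrm{Re}\sum T_{\bar m n p\bar q}h_{m\bar n}h_{q\bar p}$. It is only the hypothesis $T(-,-,X,Y)=T(-,-,Y,X)$ in the last two slots that identifies these two expressions and merges them into a single term with coefficient $4\cdot 16=64$, matching $4\cdot g^{4}$-factors $=4\cdot 16$ on the right. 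In the interlocked pattern $T_{krsl}h^{kl}h^{rs}$ the type of $k$ forces the types of $l$, then $s$, then $r$ in turn, so only one conjugate pair survives, giving $2\cdot 16$. You should make this count, and the role of the symmetry of $T$ in merging the two terms of (\ref{RiemCom_2}), explicit: it is the entire content of the lemma, and as written your proof defers it to ``bookkeeping.''
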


\section{Computing integral terms}
Throughout this section we will consider the Fubini--Study metric $g$ on $\mathbb{CP}^{n}$ with the scaling of the previous section such that the Einstein constant is $(n+1)$. All of the integrals we need to calculate are of a similar form; more specifically, for $r\in \{0,1,2,3\}$, we will need formulae for the integrals
\begin{equation}\label{eqn:Integral}
I_{n}^{r} :=\int_{\mathbb{C}^{n}}\frac{\|w\|^{2r}}{(1+\|w\|^{2})^{n+4}}\left(\frac{\sqrt{-1}}{2}\right)^{n}dw_{1}\wedge d\bar{w}_{1}\wedge \cdots \wedge dw_{n}\wedge d\bar{w}_{n}.
\end{equation}

\begin{lemma}[Standard integral values] \label{lem:SIV}
Let $I_{n}^{r}$ be as in (\ref{eqn:Integral}), then
	\[
	I_{n}^{0} = 6\cdot\frac{\pi^{n}}{(n+3)!}, \qquad I_{n}^{1} =2n\cdot\frac{\pi^{n}}{(n+3)!}, 
	\]
	\[
	I_{n}^{2} =\left(2n+2\left( \begin{array}{c} n \\2 \end{array}\right)\right)\cdot\frac{\pi^{n}}{(n+3)!} ,
	\]
	\[
	I_{n}^{3} = \left(6n +12\left( \begin{array}{c} n \\2 \end{array}\right)+6\left( \begin{array}{c} n \\3  \end{array} \right) \right) \cdot \frac{\pi^{n}}{(n+3)!},
	\]
\end{lemma}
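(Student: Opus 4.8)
The plan is to recognise $I_{n}^{r}$ as an elementary radial integral on $\RR^{2n}$, evaluate it via the Beta function, and then recast the closed form in the binomial presentation of the statement.

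First I would identify the differential-form factor with Lebesgue measure. Writing $w_{k}=x_{k}+\sqrt{-1}\,y_{k}$, one has $\tfrac{\sqrt{-1}}{2}\,dw_{k}\wedge d\bar{w}_{k}=dx_{k}\wedge dy_{k}$, so the wedge product in \eqref{eqn:Integral} is exactly the Euclidean volume $dV$ on $\RR^{2n}$, and $\norm{w}^{2}=\sum_{k}\abs{w_{k}}^{2}$ is the squared Euclidean radius. Hence $I_{n}^{r}=\int_{\RR^{2n}}\norm{w}^{2r}(1+\norm{w}^{2})^{-(n+4)}\,dV$, which depends only on $\rho=\norm{w}$.

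The main computation is then to pass to polar coordinates. Using $\mathrm{Vol}(S^{2n-1})=2\pi^{n}/(n-1)!$ and the substitution $t=\rho^{2}$ (so that $\rho^{2n-1}\,d\rho=\tfrac12 t^{n-1}\,dt$), the integral collapses to
\[
I_{n}^{r}=\frac{\pi^{n}}{(n-1)!}\int_{0}^{\infty}\frac{t^{\,n+r-1}}{(1+t)^{n+4}}\,dt=\frac{\pi^{n}}{(n-1)!}\,B(n+r,4-r)=\frac{(n+r-1)!\,(3-r)!}{(n-1)!\,(n+3)!}\,\pi^{n},
\]
where $B(a,b)=\Gamma(a)\Gamma(b)/\Gamma(a+b)$ is the Beta function and the factorial evaluation is legitimate because $n+r\ge 1$ and $4-r\ge 1$ for every $r\in\{0,1,2,3\}$. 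Specialising $r=0,1,2,3$ yields all four values at once.

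To land on the precise binomial form recorded in the lemma, I would instead expand the numerator by the multinomial theorem, $\norm{w}^{2r}=\sum_{\abs{\alpha}=r}\tfrac{r!}{\alpha_{1}!\cdots\alpha_{n}!}\prod_{k}\abs{w_{k}}^{2\alpha_{k}}$, and integrate each monomial separately. Factoring the integral over the $n$ complex coordinates — for instance via $(1+\norm{w}^{2})^{-s}=\Gamma(s)^{-1}\int_{0}^{\infty}u^{s-1}e^{-u(1+\norm{w}^{2})}\,du$, which decouples the coordinates — one finds that each monomial contributes $\prod_{k}\alpha_{k}!\cdot (3-r)!/(n+3)!\cdot\pi^{n}$. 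Grouping the multi-indices $\alpha$ by partition shape then produces exactly the coefficients $\binom{n}{1},\binom{n}{2},\binom{n}{3}$ appearing in the statement; for example, when $r=2$ the shapes $(2)$ and $(1,1)$ contribute $2\binom{n}{1}$ and $2\binom{n}{2}$ respectively.

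The calculation is entirely routine and I do not expect a substantive obstacle; the only points needing care are the normalisation constant relating the Kähler volume form to Lebesgue measure, and verifying that the partition-shape count reproduces the stated binomials. I would cross-check the two presentations at small $r$ — e.g.\ $2n+2\binom{n}{2}=(n+1)n=(n+1)!/(n-1)!$ for $r=2$ — to confirm that the combinatorial form agrees with the Beta-function closed form.
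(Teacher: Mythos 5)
Your proof is correct and complete; the paper simply states these values without proof, treating them as standard. Both of your routes check out: the Beta-function evaluation gives $I_{n}^{r}=\frac{(n+r-1)!\,(3-r)!}{(n-1)!\,(n+3)!}\pi^{n}$, which agrees with all four stated values (e.g.\ $n(n+1)=2n+2\binom{n}{2}$ and $n(n+1)(n+2)=6n+12\binom{n}{2}+6\binom{n}{3}$), and the partition-shape bookkeeping in your multinomial argument reproduces the binomial coefficients exactly; the only silent correction you make (rightly) is reading the paper's $\|w\|^{2}=w_{1}^{2}+\cdots+w_{n}^{2}$ as $\sum_{k}|w_{k}|^{2}$, which is clearly what is intended given its role in the Fubini--Study metric.
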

As explained in Section \ref{Sec:1}, the terms in the second-order obstruction can be seen as $SU_{n+1}$-invariant cubic polynomials and therefore a multiple of
\[
P_{0}(\eta):=\int_{\mathbb{CP}^{n}}f_{\eta}^{3} \ dV_{g},
\]  
where $\eta \in \mathfrak{su}_{n+1}.$ 
\begin{remark}
The integral is a (non-zero) multiple of the cubic polynomial 
\[
\sqrt{-1}\tr(\eta^{3}); 
\] 
in the case $\eta \in \mathfrak{su}_{2n+1}$, this polynomial has no non-trival zeros. (The fact that the multiple is non-zero was proved  in \cite{HMW}.) In the ${\mathbb{CP}^{2n+1}\times \mathbb{CP}^{1}}$	case, the zeros of the polynomial are easy to characterise (see \cite{BHMW}, \cite{NagSem23}, and \cite{HSS24}). Thus the set of EIDs that are unobstructed at second order is explicitly known and it might well be possible to prove that they are in fact obstructed at third order (see e.g. \cite{LZ23}) thus proving the rigidity of $\mathbb{CP}^{n}\times \mathbb{CP}^{1}$ in general.	
\end{remark}
The explicit form of the eigenfunction and the volume form in Equations (\ref{eqn:eigenfunc}) and (\ref{eqn:volform}) as well as the values of the integrals in Lemma \ref{lem:SIV} yields
\begin{equation}\label{eqn:f_3_int} 
\int_{\mathbb{CP}^{n}}f_{\gamma}^{3} \ dV_{g} = I_{n}^{3}-3nI_{n}^{2}+3n^{2}I_{n}^{1}-n^{3}I_{n}^{0} =2n(1-n^{2})\frac{\pi^{n}}{(n+3)!} .
\end{equation}
We can now compute the key quantities that Koiso needed.
\begin{lemma}[Identities (6.8.2), (6.8.10), and  (6.8.12) from Lemma 6.8 in \cite{KoiOsaka2}]
For any eigenfunction $f\in E_{1}$ we have
\begin{eqnarray}
\langle|\mathrm{Hess}f|^{2},f\rangle_{L^{2}} = 0,\label{eqn:Hess_f_ip}\\
\langle \nabla_{i}\nabla_{j} f \cdot \nabla ^{j}\nabla_{k}f,\nabla_{i}\nabla_{k}f \rangle_{L^{2}} = (n+1)^{3}\int_{\mathbb{CP}^{n}}f^{3} \ dV_{g}. \label{eqn:Hess_cube}
\end{eqnarray} 
For the eigenfunction $f_{\gamma} \in E_{1}$, we have
\begin{equation}\label{eqn:KoiHessid}
\Delta\mathrm{Hess}f_{\gamma} = 2(\mathrm{Hess}f_{\gamma}-(n+1)f_{\gamma}\cdot g).
\end{equation}
For any eigenfunction $f\in E_{1}$ we have
\begin{equation} \label{eqn:RoughLaplacianHess}
\langle \Delta(\mathrm{Hess} f+(n+1)fg), f\mathrm{Hess} f\rangle_{L^{2}} = -4n(n+1)^{2}\int_{\mathbb{CP}^{n}}f^{3} \ dV_{g}
\end{equation}

\end{lemma}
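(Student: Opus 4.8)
The common thread is the principle sketched in Section~\ref{Sec:1}. Since $f=f_\eta$ depends linearly on $\eta\in\mathfrak{su}_{n+1}$ and $SU_{n+1}$ acts by isometries permuting the eigenfunctions of $E_1$, each of the three $L^2$ quantities in \eqref{eqn:Hess_f_ip}, \eqref{eqn:Hess_cube} and \eqref{eqn:RoughLaplacianHess} is an $SU_{n+1}$-invariant homogeneous cubic polynomial in $\eta$. The space of such polynomials is one-dimensional, spanned by $P_0(\eta)=\int_{\mathbb{CP}^n}f_\eta^3\,dV_g$, so each quantity is a universal constant times $\int f^3$, and the constant is pinned down by evaluating at the single element $\gamma$ of \eqref{Eqn:Gamma_choice}, for which $\int f_\gamma^3$ is the nonzero value \eqref{eqn:f_3_int}. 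Throughout I fix, once and for all, the two normalisation facts $\mathrm{tr}_g\mathrm{Hess}f=-2(n+1)f$ (equivalently $g^{p\bar q}\nabla_p\nabla_{\bar q}f=-(n+1)f$) and $\Delta(fg)=2(n+1)fg$ for $f\in E_1$, where $\Delta=\nabla^{*}\nabla$.

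For \eqref{eqn:Hess_f_ip} I would argue by integration by parts, which makes the vanishing transparent and needs no coordinates. Writing $A=\mathrm{Hess}f$ and moving one derivative off $\int A_{ij}A^{ij}f$ gives $-\int(\nabla_iA^{ij})\,f\,\nabla_jf-\int A^{ij}\nabla_if\,\nabla_jf$. On an Einstein manifold the Bochner identity and the eigenvalue relation combine to give $\nabla_iA^{ij}=-\lambda\nabla^jf$ (the $-2\lambda$ from differentiating $\mathrm{tr}_g\mathrm{Hess}f=-2\lambda f$ combining with the $+\lambda$ Ricci term), while $A^{ij}\nabla_if=\tfrac12\nabla^j\abs{\nabla f}^2$. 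The first integral is then $\lambda\int f\abs{\nabla f}^2$ and the second, after a further integration by parts using $\mathrm{tr}_g\mathrm{Hess}f=-2\lambda f$, is $-\lambda\int f\abs{\nabla f}^2$; the two cancel. It is worth flagging that this is special to the eigenvalue being exactly $2\lambda$: a general eigenvalue $\mu$ returns $(\tfrac{\mu}{2}-\lambda)\int f\abs{\nabla f}^2$.

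For the cubic Hessian term \eqref{eqn:Hess_cube} I would evaluate the constant directly at $\gamma$. Combining \eqref{eqn:Hess_com_1}, \eqref{eqn:Hess_com_2} and the inverse metric \eqref{eqn:FS_inv}, the $(1,1)$-Hessian, viewed as an endomorphism $H$, collapses to the clean form $H=\tfrac{n+1}{1+\norm{w}^2}\bigl(\mathrm{Id}-w\,\overline{w}^{\,t}\bigr)$; since $w\,\overline{w}^{\,t}$ is rank one with nonzero eigenvalue $\norm{w}^2$, one reads off $\tr(H^3)=\tfrac{(n+1)^3}{(1+\norm{w}^2)^3}\bigl((1-\norm{w}^2)^3+(n-1)\bigr)$. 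Expanding in powers of $\norm{w}^2$ and using the volume form \eqref{eqn:volform} turns the complex $\int\tr(H^3)$ into $(n+1)^3\bigl(nI_n^0-3I_n^1+3I_n^2-I_n^3\bigr)$, which by Lemma~\ref{lem:SIV} equals $\tfrac12(n+1)^3\int f_\gamma^3$. The real quantity in \eqref{eqn:Hess_cube} is then recovered through the real-to-complex conversion \eqref{RiemCom_1} of Lemma~\ref{Lem:ComptoRiem} (the factor of two), giving $(n+1)^3\int f^3$; the care needed here is precisely this conversion factor and the index bookkeeping for $H$.

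The pointwise identity \eqref{eqn:KoiHessid} I would obtain by contracting the four-derivative formula \eqref{eqn:4Der} with $g^{p\bar q}$, which yields $g^{p\bar q}\nabla_p\nabla_{\bar q}(\nabla_k\nabla_{\bar l}f)=(n+1)fg_{k\bar l}-\nabla_k\nabla_{\bar l}f$. The rough Laplacian needs the symmetric combination $g^{p\bar q}(\nabla_p\nabla_{\bar q}+\nabla_{\bar q}\nabla_p)$, and here is the crux of the whole lemma: on this K\"ahler--Einstein manifold the curvature commutator $g^{p\bar q}[\nabla_p,\nabla_{\bar q}]$ annihilates a $(1,1)$-tensor, because the Ricci contributions from the holomorphic and antiholomorphic indices enter with opposite signs (as one can anchor by demanding that the commutator kill the parallel tensor $g_{k\bar l}$) and cancel. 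The two orderings therefore agree and $\Delta A=-2g^{p\bar q}\nabla_p\nabla_{\bar q}A=2(\mathrm{Hess}f_\gamma-(n+1)f_\gamma g)$, with no $(2,0)$ part to track by \eqref{eqn:Hess_com_2}. I expect getting these curvature signs right to be the single most error-prone step, and would cross-check against the Weitzenb\"ock formula for $\Delta\mathrm{Hess}f$ on an Einstein manifold, which predicts the same answer. Finally, \eqref{eqn:RoughLaplacianHess} assembles from the pieces: \eqref{eqn:KoiHessid} together with $\Delta(f_\gamma g)=2(n+1)f_\gamma g$ gives $\Delta(\mathrm{Hess}f_\gamma+(n+1)f_\gamma g)=2\mathrm{Hess}f_\gamma+2n(n+1)f_\gamma g$, and pairing against $f_\gamma\mathrm{Hess}f_\gamma$ produces a cubic term $2\int f_\gamma\abs{\mathrm{Hess}f_\gamma}^2$ that dies by \eqref{eqn:Hess_f_ip} and a trace term $2n(n+1)\int f_\gamma^2\,\mathrm{tr}_g\mathrm{Hess}f_\gamma=-4n(n+1)^2\int f_\gamma^3$; the invariant-cubic principle then upgrades this to all $f\in E_1$.
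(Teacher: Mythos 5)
Your proof is correct, and for \eqref{eqn:Hess_cube}, \eqref{eqn:KoiHessid} and \eqref{eqn:RoughLaplacianHess} it follows essentially the paper's route: evaluate at $f_\gamma$ using \eqref{eqn:Hess_com_1}, \eqref{eqn:FS_inv}, \eqref{eqn:4Der} and Lemma \ref{lem:SIV}, then upgrade to all of $E_1$ via the one-dimensionality of the space of $SU_{n+1}$-invariant cubics. (Your intermediate expression $(n+1)^3\bigl(nI_n^0-3I_n^1+3I_n^2-I_n^3\bigr)$ is the correct one; the paper's displayed version drops the factor $n$ on $I_n^0$, though its final value agrees with yours.) The genuine difference is \eqref{eqn:Hess_f_ip}: the paper computes $\abs{\mathrm{Hess}f_\gamma}^2$ pointwise in the $w$-coordinates and integrates term by term, whereas you integrate by parts using $\nabla_i(\mathrm{Hess}f)^{ij}=-\lambda\nabla^j f$ and $\mathrm{Hess}f(\nabla f,\cdot)=\tfrac12\,d\abs{\nabla f}^2$. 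Your route is coordinate-free, proves the identity for every $f\in E_1$ simultaneously (so the invariance argument is not needed for this item), works on any positive Einstein manifold, and makes visible that the vanishing is special to the eigenvalue $2\lambda$. You are also more explicit than the paper about why the rough Laplacian of a $(1,1)$-tensor may be computed as $-2g^{p\bar q}\nabla_p\nabla_{\bar q}$: the contracted curvature commutator reduces to a commutator with $\mathrm{Ric}$, which vanishes in the K\"ahler--Einstein case — a point the paper uses silently. All constants and conversion factors (the factor $2$ from \eqref{RiemCom_1}, the normalisations $\mathrm{tr}_g\mathrm{Hess}f=-2(n+1)f$ and $g^{p\bar q}\nabla_p\nabla_{\bar q}f=-(n+1)f$) check out against the paper's.
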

\begin{proof}
Using Equation (\ref{eqn:Hess_com_2}) we find, up to a constant that we need not worry about,
\[
|\mathrm{Hess}f|^{2} = \nabla_{k}\nabla_{\bar{l}}f\cdot\nabla^{k}\nabla^{\bar{l}}f.
\]
We now choose the eigenfunction to be $f_{\gamma}$ and calculate using (\ref{eqn:Hess_com_1}) and (\ref{eqn:FS_inv}) 
\[
|\mathrm{Hess}f_{\gamma}|^{2} = \frac{(n+1)^{2}}{(1+\|w\|^{2})^{2}}(n-2\|w\|^{2}+\|w\|^{4}).
\]
Thus, by (\ref{eqn:eigenfunc}) and the values of the integrals in Lemma \ref{lem:SIV}
 \[
\langle|\mathrm{Hess}f_{\gamma}|^{2},f_{\gamma}\rangle_{L^{2}} = (n+1)^{3}\left(I_{n}^{3}-(n+2)I_{n}^{2}+3nI_{n}^{1}-n^{2}I_{n}^{0} \right)=0.
\]
and an application of the standard argument on invariant polynomials proves the first identity. \\
\\
For the second, we again use (\ref{eqn:Hess_com_1}) and (\ref{eqn:FS_inv}) to
prove
\[
H^{k}_{l}:=\nabla^{k}\nabla_{l}f_{\gamma} = \frac{n+1}{1+\|w\|^{2}}\left( \delta_{kl}-\overline{w}_{k}w_{l}\right).
\]
Thus, the (complex coordinate version of the) integrand of the second identity is,
\[
\langle \nabla_{k}\nabla_{\bar{p}} f_{\gamma} \cdot \nabla ^{\bar{p}}\nabla_{\bar{l}}f_{\gamma},\nabla_{k}\nabla_{\bar{l}}f_{\gamma}\rangle = H^{k}_{l}H^{l}_{p}H^{p}_{k} = \frac{(n+1)^{3}}{(1+\|w\|^{2})^{3}}\left(n-3\|w\|^{2}+3\|w\|^{4}-\|w\|^{6}\right).
\]
Using the values in Lemma \ref{lem:SIV} yields
\[
\langle \nabla_{k}\nabla_{\bar{p}} f_{\gamma} \cdot \nabla ^{\bar{p}}\nabla_{\bar{l}}f_{\gamma},\nabla_{k}\nabla_{\bar{l}}f_{\gamma}\rangle_{L^{2}} =(n+1)^{3}\left( I_{n}^{0}-3I_{n}^{1}+3I_{n}^{2}-I_{n}^{3}\right) = (n+1)^{3}n(1-n^{2})\frac{\pi^{n}}{(n+3)!}.
\]
The identity follows from (\ref{eqn:f_3_int}) and multiplying by the appropriate factor from Lemma \ref{Lem:ComptoRiem}. \\
\\
The final identity follows by noting that
\[
\Delta \mathrm{Hess} f_{\gamma} = -2g^{p\bar{q}}\nabla_{{p}}\nabla_{\bar{q}}\nabla_{k}\nabla_{\bar{l}}f_{\gamma} = -2(n+1) f_{\gamma}g_{k\bar{l}}+2(\mathrm{Hess} f_{\gamma})_{k\bar{l}},
\]
where we have used the Equation (\ref{eqn:4Der}) in the final equality. Integratation, as well as an application of the first identity (\ref{eqn:Hess_f_ip}), and the standard argument about cubic polynomials yields the result. 
\end{proof}
We can now give a fairly easy proof of the following.
\begin{lemma}[Koiso, Lemma 6.9 in \cite{KoiOsaka2}]
For any eigenfunction $f\in E_{1}$ we have
\begin{eqnarray}
\langle\nabla_{i}\nabla_{j}\nabla_{k}\nabla_{l}f, \nabla_{i}\nabla_{j}f\cdot \nabla_{k}\nabla_{l}f \rangle_{L^{2}} = -2(n+1)^{3}\int_{\mathbb{CP}^{n}}f^{3} \ dV_{g}, \label{eqn:DHess_1} \\
\langle\nabla_{i}\nabla_{k}\nabla_{j}\nabla_{l}f, \nabla_{i}\nabla_{j}f\cdot \nabla_{k}\nabla_{l}f \rangle_{L^{2}} = -(n+1)^{3}\int_{\mathbb{CP}^{n}}f^{3} \ dV_{g}. \label{eqn:DHess_2}
\end{eqnarray}
\end{lemma}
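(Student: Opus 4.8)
The plan is to reduce everything to the distinguished eigenfunction $f_{\gamma}$ and then exploit the algebraic identity (\ref{eqn:4Der}) to collapse the four-derivative tensor into pieces built from the metric and the Hessian. Exactly as in the previous lemma, both sides of (\ref{eqn:DHess_1}) and (\ref{eqn:DHess_2}) are $SU_{n+1}$-invariant homogeneous cubic polynomials in $\eta\in\mathfrak{su}_{n+1}$ (with $f=f_{\eta}$), and the space of such polynomials is one-dimensional; hence it suffices to verify each identity for the single choice $\eta=\gamma$, the general case following by comparison with the generator $\int f^{3}\,dV_{g}$.

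First I would pass from the real $L^{2}$-pairings to complex coordinates. The tensor $T_{ijkl}=\nabla_{i}\nabla_{j}\nabla_{k}\nabla_{l}f$ is symmetric and $J$-invariant in its last two slots (the latter by the remark preceding Lemma \ref{Lem:ComptoRiem}), so (\ref{RiemCom_2}) applies directly to (\ref{eqn:DHess_1}), introducing a factor $4\,\mathrm{Re}$ and the complex integrand $\langle \nabla_{k}\nabla_{\bar{l}}\nabla_{r}\nabla_{\bar{s}}f,\, h_{k\bar{l}}h_{r\bar{s}}\rangle$ with $h=\mathrm{Hess}\,f$. For (\ref{eqn:DHess_2}) the index pattern is crossed, but using the symmetry of $T$ in its last two slots it can be rewritten in the form to which (\ref{RiemCom_3}) applies, introducing a factor $2\,\mathrm{Re}$ and the complex integrand $\langle \nabla_{k}\nabla_{\bar{r}}\nabla_{s}\nabla_{\bar{l}}f,\, h_{k\bar{l}}h_{s\bar{r}}\rangle$.

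The key step is to insert (\ref{eqn:4Der}), which writes each four-derivative term as a sum $-(g_{\bullet\bar{\bullet}}\,h_{\bullet\bar{\bullet}}+g_{\bullet\bar{\bullet}}\,h_{\bullet\bar{\bullet}})$. In each case exactly one of the two resulting summands contracts the metric against a single Hessian factor, producing the pointwise trace $g^{k\bar{l}}\nabla_{k}\nabla_{\bar{l}}f=-(n+1)f$ multiplied by $|\mathrm{Hess}\,f|^{2}$; this integrates to a multiple of $\langle|\mathrm{Hess}\,f|^{2},f\rangle$, which vanishes by (\ref{eqn:Hess_f_ip}). The other summand chains the three Hessians into the cyclic triple contraction, namely the complex Hessian cube already evaluated in the proof of (\ref{eqn:Hess_cube}), whose value is $(n+1)^{3}n(1-n^{2})\pi^{n}/(n+3)!$. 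Because (\ref{eqn:f_3_int}) gives $\int f_{\gamma}^{3}\,dV_{g}=2n(1-n^{2})\pi^{n}/(n+3)!$, this summand contributes $-\tfrac{1}{2}(n+1)^{3}\int f_{\gamma}^{3}\,dV_{g}$ to the complex integrand in both cases. Multiplying by the conversion factors $4$ and $2$ from (\ref{RiemCom_2}) and (\ref{RiemCom_3}) then yields $-2(n+1)^{3}\int f^{3}\,dV_{g}$ and $-(n+1)^{3}\int f^{3}\,dV_{g}$ respectively, as claimed.

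The main obstacle I anticipate is purely bookkeeping: correctly matching the crossed index pattern of (\ref{eqn:DHess_2}) to the hypotheses of (\ref{RiemCom_3}), and, after applying (\ref{eqn:4Der}), reliably sorting the two resulting terms into the \emph{trace} piece (which dies against (\ref{eqn:Hess_f_ip})) and the \emph{cyclic} piece (which is the Hessian cube). Keeping the real-versus-complex normalisations and the factors of $4$ and $2$ consistent throughout is where an error would most easily creep in.
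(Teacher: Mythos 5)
Your proposal is correct and follows essentially the same route as the paper: reduce to $f_{\gamma}$ by the invariance of the cubic polynomials, convert to complex coordinates via Lemma \ref{Lem:ComptoRiem} (factors $4$ and $2$), insert (\ref{eqn:4Der}) so that one summand becomes the trace term killed by (\ref{eqn:Hess_f_ip}) and the other the Hessian cube evaluated in (\ref{eqn:Hess_cube}). The bookkeeping for the crossed indices in (\ref{eqn:DHess_2}) that you flag is resolved in the paper exactly as you suggest, by using the symmetry of the Hessian before applying (\ref{RiemCom_3}).
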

\begin{proof}
For the first identity we use (\ref{eqn:4Der}) and (\ref{eqn:FS_inv}) to calculate 
\[
\langle\nabla_{{p}}\nabla_{\bar{q}}\nabla_{k}\nabla_{\bar{l}}f_{\gamma},\nabla_{{p}}\nabla_{\bar{q}}f_{\gamma}\cdot  \nabla_{{k}}\nabla_{\bar{l}}f_{\gamma}\rangle = (n+1)\langle|\mathrm{Hess}f_{\gamma}|^{2}, f_{\gamma}\rangle -\langle \nabla_{k}\nabla_{\bar{p}} f_{\gamma} \cdot \nabla ^{\bar{p}}\nabla_{\bar{l}}f_{\gamma},\nabla_{k}\nabla_{\bar{l}}f_{\gamma}\rangle.
\]
Using the standard argument about cubic polynomials, the result follows from (\ref{eqn:Hess_f_ip}) and (\ref{eqn:Hess_cube}) as well as noting the factor of 4 coming from Lemma \ref{Lem:ComptoRiem}.\\
\\
For the second identity, we note that as the Hessian is symmetric, we can also calculate,
\[
\langle\nabla_{i}\nabla_{k}\nabla_{l}\nabla_{j}f, \nabla_{i}\nabla_{j}f\cdot \nabla_{l}\nabla_{k}f \rangle_{L^{2}}
\] 
Thus in complex coordinates
\[
\langle\nabla_{p}\nabla_{\bar{k}}\nabla_{l}\nabla_{\bar{q}}f, \nabla_{p}\nabla_{\bar{q}}f\cdot \nabla_{l}\nabla_{\bar{k}}f \rangle = (n+1)\langle|\mathrm{Hess}f_{\gamma}|^{2}, f_{\gamma}\rangle -\langle \nabla_{k}\nabla_{\bar{p}} f_{\gamma} \cdot \nabla ^{\bar{p}}\nabla_{\bar{l}}f_{\gamma},\nabla_{k}\nabla_{\bar{l}}f_{\gamma}\rangle.
\]
Using the standard argument regarding cubic polynomials, the result follows from (\ref{eqn:Hess_f_ip}) and (\ref{eqn:Hess_cube}) as well as noting the factor of 2 coming from Lemma \ref{Lem:ComptoRiem}.
\end{proof}
We will need a final identity not explicitly used by Koiso in order to bypass some of the calculations of \cite{KoiOsaka2}
\begin{lemma}
	For any eigenfunction $f\in E_{1}$ we have
	\begin{equation}\label{eqn:notkoi_4d}
	\langle\nabla_{i}\nabla_{l}\nabla_{k}\nabla_{j}f,fg_{ij}\cdot\nabla_{k}\nabla_{l} f\rangle_{L^{2}} = -2(n+1)^{2}\int_{\mathbb{CP}^{n}}f^{3}dV_{g}
	\end{equation}
\end{lemma}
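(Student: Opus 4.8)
The plan is to follow exactly the template established for the previous two lemmas: reduce the real $L^2$ inner product to a complex-coordinate contraction via Lemma \ref{Lem:ComptoRiem}, simplify the fourth covariant derivative with the key identity (\ref{eqn:4Der}), and recognise the surviving pieces as quantities already evaluated in (\ref{eqn:Hess_f_ip}) and (\ref{eqn:f_3_int}). Since the left-hand side of (\ref{eqn:notkoi_4d}) is an $SU_{n+1}$-invariant cubic polynomial in $\eta$, it is automatically a fixed multiple of $\int_{\mathbb{CP}^n}f^3\,dV_g$; hence it suffices to perform the computation for the single eigenfunction $f_\gamma$ and read off the constant.

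First I would match the integrand $\langle\nabla_i\nabla_l\nabla_k\nabla_j f,\, fg_{ij}\nabla_k\nabla_l f\rangle$ to the pattern of Lemma \ref{Lem:ComptoRiem}. Writing $T_{ilkj}=\nabla_i\nabla_l\nabla_k\nabla_j f$, the two symmetric $2$-tensors paired against $T$ are $fg$ (pairing the indices $i,j$) and $\mathrm{Hess}f$ (pairing the indices $l,k$); this is precisely the crossed configuration of (\ref{RiemCom_3}), so the real quantity equals $2\,\mathrm{Re}$ of the complex contraction $\langle\nabla_i\nabla_{\bar l}\nabla_k\nabla_{\bar j}f,\, fg_{i\bar j}\nabla_k\nabla_{\bar l}f\rangle$. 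Determining the correct barred/unbarred placement here — i.e.\ checking that it is (\ref{RiemCom_3}) rather than (\ref{RiemCom_2}) that applies, and that the resulting conversion factor is $2$ and not $4$ — is the one genuinely error-prone bookkeeping step, and I regard it as the main obstacle.

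Next I would substitute (\ref{eqn:4Der}), which gives $\nabla_i\nabla_{\bar l}\nabla_k\nabla_{\bar j}f=-(g_{k\bar j}\nabla_i\nabla_{\bar l}f+g_{k\bar l}\nabla_i\nabla_{\bar j}f)$, splitting the complex contraction into two pieces. The piece carrying $g_{k\bar j}$ contracts into a single index loop running through both Hessian factors, so it is a multiple of $f_\gamma|\mathrm{Hess}f_\gamma|^2$, whose integral vanishes by (\ref{eqn:Hess_f_ip}). The piece carrying $g_{k\bar l}$ instead factorises into two separate traces of the complex Hessian; using $g^{k\bar l}\nabla_k\nabla_{\bar l}f_\gamma=-(n+1)f_\gamma$ — the same trace identity already exploited in the proof of (\ref{eqn:KoiHessid}) — this piece becomes $f_\gamma\,(\mathrm{tr}_{\mathbb C}\mathrm{Hess}f_\gamma)^2=(n+1)^2 f_\gamma^3$.

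Finally I would assemble the pieces: the complex contraction integrates to $-(n+1)^2\int_{\mathbb{CP}^n}f_\gamma^3\,dV_g$, and multiplying by the factor $2$ from (\ref{RiemCom_3}) gives $-2(n+1)^2\int_{\mathbb{CP}^n}f_\gamma^3\,dV_g$, which is the claimed constant; the invariant-polynomial argument then extends the identity to every $f\in E_1$. As an independent sanity check, one could bypass the reduction entirely and insert the explicit expressions (\ref{eqn:Hess_com_1}) and (\ref{eqn:4Der}) together with the standard integral values of Lemma \ref{lem:SIV}, which should reproduce the same value.
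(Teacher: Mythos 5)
Your proposal is correct and follows essentially the same route as the paper: specialise to $f_{\gamma}$, substitute (\ref{eqn:4Der}) to split the contraction into a piece proportional to $f_{\gamma}|\mathrm{Hess}f_{\gamma}|^{2}$ (which vanishes upon integration by (\ref{eqn:Hess_f_ip})) and a piece that factors through the trace $g^{k\bar l}\nabla_k\nabla_{\bar l}f_{\gamma}=-(n+1)f_{\gamma}$ to give $-(n+1)^{2}f_{\gamma}^{3}$, then apply the factor $2$ from (\ref{RiemCom_3}) and the invariant-polynomial argument. Your identification of the crossed-index case (\ref{RiemCom_3}) with conversion factor $2$ agrees with the paper, and the final constant $-2(n+1)^{2}$ matches.
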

\begin{proof}
	
	We specialise to $f_{\gamma}$. Calculating in complex coordinates, we have, pointwise, by (\ref{eqn:4Der})
	\[
	\langle\nabla_{i}\nabla_{\bar{l}}\nabla_{k}\nabla_{\bar{j}}f_{\gamma},f_{\gamma}g_{i\bar{j}}\cdot\nabla_{k}\nabla_{\bar{l}} f_{\gamma}\rangle = -\langle g_{k\bar{j}}\nabla_{i}\nabla_{\bar{l}}f_{\gamma}+g_{k\bar{l}}\nabla_{i}\nabla_{\bar{j}}f_{\gamma},f_{\gamma}g_{i\bar{j}}\nabla_{k}\nabla_{\bar{l}}f_{\gamma} \rangle 
	\] 
	\[
	=-\langle |\mathrm{Hess} f_{\gamma}|^{2},f_{\gamma}\rangle - (n+1)^{2}f_{\gamma}^{3}.
	\]
	The result follows after integrating,  using (\ref{eqn:Hess_f_ip}), scaling by the factor 2 as in Lemma \ref{Lem:ComptoRiem}, and applying the standard argument about $SU_{n+1}$-invariant polynomials. 
\end{proof}	
\section{Computing the obstruction integral}
With the identities of the previous section in hand, we can now more-or-less follow Koiso's original calculation of the obstruction.  
\begin{proof}(of Koiso's Theorem)\\
As in \cite{KoiOsaka2}, we write
\[
\psi = \mathrm{Hess}f+(n+1)fg_{1} \qquad \mathrm{and} \qquad \varphi=(1-n)(n+1)fg_{0},
\]
where $f\in E_{1}$. This infinitesimal deformation associated to $f$ is then
\[
h = \psi +\varphi.
\]
The zeroth-order term is thus
\[
\langle h_{ik}h^{k}_{j},h_{ij} \rangle_{L^{2}} = \langle \psi_{ik}\psi^{k}_{j},\psi_{ij} \rangle_{L^{2}} + \langle \varphi_{ik}\varphi^{k}_{j},\varphi_{ij} \rangle_{L^{2}}.
\]
Calculating the term $\langle \psi_{ik}\psi^{k}_{j},\psi_{ij} \rangle_{L^{2}}$ yields (we omit the volume form for readability)
\[
\langle \nabla_{i}\nabla_{j}f, \nabla_{i}\nabla_{k}f\cdot \nabla^{k}\nabla_{j}f \rangle_{L^{2}}+3(n+1)\langle|\mathrm{Hess}f|^{2},f\rangle_{L^{2}}-6(n+1)^{3}\int_{\mathbb{CP}^{n}} f^{3}+2n(n+1)^{3}\int_{\mathbb{CP}^{n}} f^{3}.
\]
Using Equations (\ref{eqn:Hess_cube}) and (\ref{eqn:Hess_f_ip}) on the first two terms in the previous expression gives
\[
\langle \psi_{ik}\psi^{k}_{j},\psi_{ij} \rangle_{L^{2}} = (2n-5)(n+1)^{3}\int_{\mathbb{CP}^{n}} f^{3} \ dV_{g_{1}}.
\]
It is easy to see that
\[
\langle \varphi_{ik}\varphi^{k}_{j},\varphi_{ij} \rangle_{L^{2}} = 2(1-n)^{3}(n+1)^{3} \int_{\mathbb{CP}^{n}} f^{3} \ dV_{g_{1}},
\]
and thus
\begin{equation} \label{eqn:KoisoZeroOrder}
\langle h_{ik}h^{k}_{j},h_{ij} \rangle_{L^{2}} = -\left(2 n^3 - 6 n^2 + 4 n + 3\right)(n+1)^{3}\int_{\mathbb{CP}^{n}}f^{3} \ dV_{g_{1}}.
\end{equation}

The second term in Koiso's obstruction (\ref{eqn:Koiobs}) splits as

\[
\langle \nabla_{i}\nabla_{j}h_{kl},h_{ij}h_{kl}\rangle_{L^{2}} = \langle \nabla_{i}\nabla_{j}\psi_{kl},\psi_{ij}\psi_{kl}\rangle_{L^{2}} + \langle \nabla_{i}\nabla_{j}\phi_{kl},\psi_{ij}\phi_{kl}\rangle_{L^{2}}. 
\]
The first term in the splitting is given by 

\begin{multline*}
 \langle \nabla_{i}\nabla_{j}\psi_{kl},\psi_{ij}\psi_{kl}\rangle_{L^{2}} =\\
 \langle \nabla_{i}\nabla_{j}\psi_{kl},\nabla_{i}\nabla_{j}f\cdot\nabla_{k}\nabla_{l}f\rangle_{L^{2}} + (n+1) \langle \nabla_{i}\nabla_{j}\psi_{kl},\nabla_{i}\nabla_{j}f\cdot f(g_{1})_{kl}\rangle_{L^{2}}\\
+(n+1) \langle \nabla_{i}\nabla_{j}\psi_{kl},f(g_{1})_{ij} \cdot\nabla_{k}\nabla_{l}f\rangle_{L^{2}}+(n+1)^{2}\langle \nabla_{i}\nabla_{j}\psi_{kl},f^{2}(g_{1})_{ij} \cdot (g_{1})_{kl}\rangle_{L^{2}} \\
=\langle \nabla_{i}\nabla_{j}\nabla_{k}\nabla_{l}f,\nabla_{i}\nabla_{j}f\cdot \nabla_{k}\nabla_{l}f\rangle_{L^{2}}+(n+1)\langle\nabla_{i}\nabla_{j}f\cdot(g_{1})_{kl},\nabla_{i}\nabla_{j}f\cdot\nabla_{k}\nabla_{l}f\rangle_{L^{2}}
\end{multline*}
\[
-(n+1)\langle \Delta \psi_{kl}, f\nabla_{k}\nabla_{l}f\rangle_{L^{2}}-(n+1)^{2}\langle \Delta\mathrm{tr}(\psi),f^{2}\rangle_{L^{2}}.
\]
Using (\ref{eqn:DHess_1}), (\ref{eqn:Hess_f_ip}) yields
\begin{multline*}
\langle \nabla_{i}\nabla_{j}\psi_{kl},\psi_{ij}\psi_{kl}\rangle_{L^{2}} \\
=-2(n+1)^{3}\int_{\mathbb{CP}^{n}}f^{3}-(n+1)\langle \Delta\psi_{kl},f\nabla_{k}\nabla_{l}f\rangle_{L^{2}}-4(n-1)(n+1)^{4}\int_{\mathbb{CP}^{n}}f^{3}.
\end{multline*}
Using Equation (\ref{eqn:RoughLaplacianHess}) we have
\[
\langle \nabla_{i}\nabla_{j}\psi_{kl},\psi_{ij}\psi_{kl}\rangle_{L^{2}} = -2(n+1)^{3}(2n^{2}-n-1)\int_{\mathbb{CP}^{n}}f^{3} \ dV_{g_{1}}
\]
The second term can be computed as
\[
\langle \nabla_{i}\nabla_{j}\phi_{kl},\psi_{ij}\phi_{kl}\rangle_{L^{2}}  = 2(1-n)^{2}(n+1)^{2}\langle \nabla_{i}\nabla_{j}f,f\nabla_{i}\nabla_{j}f+(n+1)f^{2}(g_{1})_{ij}\rangle_{L^{2}},
\]
\[
=-4(1-n)^{2}(1+n)^{4}\int_{\mathbb{CP}^{n}} f^{3} \ dV_{g_{1}},
\]
where we have used Equation (\ref{eqn:Hess_f_ip}). Hence
\begin{equation}\label{eqn: KoisoFirstOrder1}
\langle \nabla_{i}\nabla_{j}h_{kl},h_{ij}h_{kl}\rangle_{L^{2}} =-2 (n + 1)^3 (2 n^3 - 4 n + 1)\int_{\mathbb{CP}^{n}} f^{3} \ dV_{g_{1}}
\end{equation}

The final term in (\ref{eqn:Koiobs}) splits as
\[
\langle\nabla_{i}\nabla_{l}h_{kj},h_{ij}h_{kl}\rangle_{L^{2}} = \langle\nabla_{i}\nabla_{l}\psi_{kj},\psi_{ij}\psi_{kl}\rangle_{L^{2}}  
\]
The fact that $\psi$ is divergence free yields
\begin{multline*}
\langle\nabla_{i}\nabla_{l}\psi_{kj},\psi_{ij}\psi_{kl}\rangle_{L^{2}} \\ = \langle\nabla_{i}\nabla_{l}\psi_{kj},\nabla_{i}\nabla_{j} f\cdot \nabla_{k}\nabla_{l} f \rangle_{L^{2}} + (n+1)\langle\nabla_{i}\nabla_{l}\psi_{kj},f(g_1)_{ij}\cdot\nabla_{k}\nabla_{l} f\rangle_{L^{2}}
\end{multline*}
The first term splits again, so
\begin{multline*}
\langle\nabla_{i}\nabla_{l}\psi_{kj},\nabla_{i}\nabla_{j} f\cdot \nabla_{k}\nabla_{l} f \rangle_{L^{2}} \\
= \langle\nabla_{i}\nabla_{l}\nabla_{k}\nabla_{j}f,\nabla_{i}\nabla_{j} f\cdot \nabla_{k}\nabla_{l} f \rangle_{L^{2}}+(n+1)\langle\nabla_{i}\nabla_{l}f\cdot (g_{1})_{kj},\nabla_{i}\nabla_{j} f\cdot \nabla_{k}\nabla_{l} f \rangle_{L^{2}}.
\end{multline*}
Using identities (\ref{eqn:DHess_2}) and (\ref{eqn:Hess_cube}) yields
\[
\langle\nabla_{i}\nabla_{l}\psi_{kj},\nabla_{i}\nabla_{j} f\cdot \nabla_{k}\nabla_{l} f \rangle_{L^{2}} = \left(-(n+1)^3+(n+1)^{4}\right)\int_{\mathbb{CP}^{n}}f^{3} \ dV_{g_{1}}.
\] 
The second term can be split
\begin{multline*}
\langle\nabla_{i}\nabla_{l}\psi_{kj},f(g_1)_{ij}\cdot\nabla_{k}\nabla_{l} f\rangle_{L^{2}}\\ = \langle\nabla_{i}\nabla_{l}\nabla_{k}\nabla_{j}f,f(g_1)_{ij}\cdot\nabla_{k}\nabla_{l} f \rangle_{L^{2}}+(n+1)\langle\nabla_{i}\nabla_{l}f\cdot(g_{1})_{kj}, f(g_1)_{ij}\cdot\nabla_{k}\nabla_{l} f \rangle_{L^{2}}.
\end{multline*}
Hence from Equation (\ref{eqn:notkoi_4d}) and (\ref{eqn:Hess_f_ip}) we obtain
\[
\langle\nabla_{i}\nabla_{l}\psi_{kj},f(g_1)_{ij}\cdot\nabla_{k}\nabla_{l} f\rangle_{L^{2}} = -2(n+1)^{2}\int_{\mathbb{CP}^{n}}f^{3} \ dV_{g_{1}}.
\]
Finally,
\begin{equation}\label{eqn:KoisoFirstOrder2}
\langle\nabla_{i}\nabla_{l}h_{kj},h_{ij}h_{kl}\rangle_{L^{2}} =((n+1)^{4}-3(n+1)^{3})\int_{\mathbb{CP}^{n}}f^{3} \ dV_{g_{1}}.
\end{equation}
We compute Koiso's obstruction (\ref{eqn:Koiobs}) using Equations (\ref{eqn:KoisoZeroOrder}), (\ref{eqn: KoisoFirstOrder1})), and (\ref{eqn:KoisoFirstOrder2}))
\begin{multline*}
\mathcal{I}(h) = \left(-2(n+1)^{4}(2n^{3}-6n^{2}+4n+3)- \right. \\
\left. 6(n+1)^{3}(2n^{3}-4n+1)-6(n+1)^{3}(n-2)\right) \int_{\mathbb{CP}^{n}}f^{3} \ dV_{g_{1}}.
\end{multline*}
Simplifying, yields
\[
\mathcal{I}(h) =-4n(n-1)(n+1)^{5}\int_{\mathbb{CP}^{n}}f^{3} \ dV_{g_{1}}.
\]
The theorem follows for ${\mathbb{CP}^{2n}\times \mathbb{CP}^{1}}$ as the integral does not vanish for any $f\in E_{1}$ and so all the EIDs are obstructed to second order.
\end{proof}

\begin{remark}
	We can compare our calculation of $\mathcal{I}(h)$ to the quantity on page 667 of \cite{KoiOsaka2} (written in the notation of \cite{KoiOsaka2}),
	\[
	\langle E''(h,h),h\rangle = -\frac{(n_{1}-2)(n_{1}+n_{2}-2)(n_{1}+2n_{2}-2)}{n_{2}^{2}}\mathcal{E}^{4}\langle f^{2},f\rangle.
	\]
	Unpacking this in our notation, $\langle E''(h,h),h\rangle = \frac{1}{2}\mathcal{I}(h)$, $n_{1}=2n$,  $n_{2}=2$, and $\mathcal{E}=(n+1)$. Thus we see that we recover exactly Koiso's original constant.
\end{remark}
\bibliographystyle{acm} 
\bibliography{RigidityRefs}

\end{document}